\newfont{\popis}{cmcsc10}
 \newtheorem{defi}{Definition} %[section]
 \newtheorem{thm}{Theorem} %[section]
 \newtheorem{cor}{Corollary}  %[section]
\newtheorem{lem}{Lemma} %[section]
\newtheorem{rmk}{Remark} %[section]
\theoremstyle{definition}
 \newcommand{\carka}{\raise0.2em\hbox{,}}
\newcommand{\begeqO}{\begin{eqnarray*}}
\newcommand{\eneqO}{\end{eqnarray*}}
\newcommand{\begeq}{\begin{eqnarray}}
\newcommand{\eneq}{\end{eqnarray}}
\newcommand{\nin}{\noindent}
\newcommand{\cL}{\mathcal{L}}
\newcommand{\cS}{\mathcal{S}}
\newcommand{\Ima}{\mathrm{Im}}
\newcommand{\spa}{\overline{\mathrm{sp}}}
\newcommand{\convP}{\stackrel{P}{\rightarrow}}
\newcommand{\isd}{\stackrel{d}{=}}
\newcommand{\E}{E}
\newcommand{\rank}{\mathrm{rank}}
\newcommand{\Prob}{P}
\newcommand{\ip}[2]{\langle #1,#2 \rangle}
\begin{document}
\renewcommand{\thefootnote}{\fnsymbol{footnote}}
\nin {\LARGE\textbf{A note on estimation in Hilbertian linear models}
\renewcommand{\thefootnote}{\arabic{footnote}}}

\makeatletter
\renewcommand{\@fnsymbol}[1]{\ensuremath{%
   \ifcase#1\or  \star \or \ddagger \or *\or
   \mathsection\or \mathparagraph\or \|\or \star\or
   \star\star\or {\star\star}\star \else\@ctrerr\fi}}
\makeatother

\bigskip\nin
{
{\textbf{Siegfried H\"ormann, {\L}ukasz Kidzi\'nski}}

\bigskip\nin
{
D\'epartment de Math\'ematique,
Universit\'e libre de Bruxelles (ULB),
%CP 215,
%Boulevard du Triomphe, 
%B-1050 Bruxelles,
Belgium}

\bigskip\nin

\begin{abstract}
\nin 
We study estimation and prediction in linear models where the response and the regressor variable both take values in some Hilbert space. Our main objective is to obtain consistency of a principal components based estimator for the regression operator under minimal assumptions. In particular, we avoid some inconvenient technical restrictions that have been used throughout the literature.
We develop our theory in a time dependent setup which comprises as important special case the autoregressive Hilbertian model.
\end{abstract}

\noindent{\em Keywords:} adaptive estimation, consistency, dependence, functional regression, Hilbert spaces,  infinite-dimensional data, prediction.

% \noindent{\em MSC2010:} Primary 62G05, 62G20; Secondary 62J05, 62M10.

\section{Introduction}\label{sec 1}

In this paper we are concerned with a regression problem of the form
\begin{equation}\label{eq:model}
Y_k=\Psi(X_k)+\varepsilon_k,\quad k\geq 1,
\end{equation}
where $\Psi$ is a bounded linear operator mapping from space $H_1$ to $H_2$. This model is fairly general and many special cases have been intensively studied in the literature. Our main objective is the study of this model when the regressor space $H_1$ is infinite dimensional. Then model \eqref{eq:model} can be seen as a general formulation of a {\em functional linear model}, which is an integral part of functional data literature.  Its  various
forms are introduced in  Chapters 12--17
of Ramsay and Silverman~\cite{rs05}. A few recent references 
are
Cuevas et al.~\cite{cuevas:febrero:fraiman:2002}, %fixed design/FRes/FReg/strongconsist/moments of all orders
Malfait and Ramsay~\cite{malfait:ramsay:2003}, %no asymptotics
Cardot et al.~\cite{cardot:ferraty:sarda:2003},
Chiou et al.~\cite{chiou:muller:wang:2004},
M\"uller and Stadtm\"uller~\cite{muller:stadtmuller:2005},
Yao et al.~\cite{yao:muller:wang:2005},
Cai and Hall~\cite{cai:hall:2006},
Li and Hsing~\cite{li:hsing:2007},
Hall and Horowotiz~\cite{hall:horowitz:2007},
Reiss and Ogden~\cite{reiss:ogden:2007},
Febrero-Bande et al.~\cite{febrero:galeano:gonzalez:2010},
Crambes et al.~\cite{crambes:kneip:sarda:2010},
Yuan and Cai~\cite{yuan:cai:2011},
Ferraty et al.~\cite{ferraty:laksaci:tadj:vieu:2011},
Crambes and Mas~\cite{crambes:mas:2012}.

From an inferential point of view, a natural problem is the estimation of the `regression operator' $\Psi$. Once an estimator $\hat\Psi$ is obtained, we can use it in an obvious way for prediction of the responses $Y$. Both, the estimation and the prediction problem are addressed in this paper. In existing literature, these problems have been discussed from several angles. For example, there is the distinction between the `functional regressors and responses' model (e.g., Cuevas et al.~\cite{cuevas:febrero:fraiman:2002}) or the perhaps more widely studied `functional regressor and scalar response model' (e.g., Cardot et al.~\cite{cardot:ferraty:sarda:1999}). Other papers deal with the effect when random functions are not fully observed but are obtained from sparse, irregular data measured with error (e.g., Yao et al.~\cite{yao:muller:wang:2005}). More recently, the focus was on establishing rates of consistency (e.g., Cai and Hall~\cite{cai:hall:2006}, Cardot and Johannes~\cite{cardot:johannes:2010}). The two most popular methods of estimation are based on principal component analysis (e.g., Bosq~\cite{bosq:1991}, Cardot et al.~\cite{cardot:ferraty:sarda:1999}, Hall and Horowitz~\cite{hall:horowitz:2007}) or spline smoothing estimators (e.g., Hastie and Mallows~\cite{hm93}, Marx and Eiler~\cite{me96}, Crambes et al.~\cite{crambes:kneip:sarda:2010}).

In this paper we address the estimation and prediction problem for this model when the data are fully observed, using the principal component (PC) approach.
Let us explain what is the new contribution and what distinguishes our paper from previous work.\medskip

\nin
(i) The crucial difficulty for this type of problems is that the infinite dimensional operator $\Psi$ needs to be approximated by a sample version $\hat\Psi_K$ of finite dimension $K$, say. Clearly, $K=K_n$ needs to depend on the sample size and tend to $\infty$ in order to obtain an asymptotically unbiased estimator. In existing papers determination of $K$ and proof of consistency require, among others, unnecessary moment assumptions and artificial restrictions concerning the spectrum of the covariance operator of the regressor variables $X_k$. As our main result, we will complement the current literature by showing that the PC estimator remains consistent without such technical constraints. We provide a {\em data-driven procedure} for the choice of $K$, which may even be used as a practical alternative to cross-validation.\medskip

\nin
(ii) We allow the regressors $X_k$ to be dependent. This is important for two reasons. First, many examples in FDA literature exhibit dependencies as the data stem from a continuous time process, which is then segmented into a sequence of curves, e.g., by considering daily data. Examples of this kind include intra-day patterns of pollution records, meteorological data, financial transaction data or sequential fMRI recordings. See, e.g., Horv\'ath and Kokoszka~\cite{horvath:kokoszka:2012}.

 Second, our framework detailed below will include the important special case of a {\em functional autoregressive model} which has been intensively investigated in the functional literature and is often used to model autoregressive dynamics of a functional time series. This model is analyzed in detail in Bosq~\cite{b00}.  We can not only greatly simplify the assumptions needed for consistent estimation, but also allow for a more general setup. E.g., in our Theorem~\ref{thm:convX} we show that it is not necessary to assume that $\Psi$ is a Hilbert-Schmidt operator if our intention is prediction. This  quite restrictive assumption is standard in existing literature, though it even excludes the identity operator.\medskip

\nin
(iii) As we already mentioned before, the literature considers different forms of functional linear models. Arguably the most common are the {\em scalar response and functional regressor} and the {\em functional response and functional regressor case}. We will not distinguish between these cases, but work with a linear model between two general Hilbert spaces.\medskip

In the next section we will introduce notation, assumptions, the estimator and our main results. In Section~\ref{s:simulation} we provide a small simulation study which compares our data driven choice of $K$ with cross-validation (CV). As we will see, this procedure is quite competitive with CV in terms of mean squared prediction error, while it is clearly favorable to the latter in terms of computational costs. Finally, in Section~\ref{s:proofs}, we give the proofs.

\section{Estimation of $\Psi$}\label{sec 2}

\subsection{Notation}
Let $H_1,H_2$ be two (not necessarily distinct) separable Hilbert spaces. We denote by $\cL(H_i,H_j)$, $(i,j\in\{1,2\}$), the space of bounded linear operators from $H_i$ to $H_j$. Further we write $\ip{\cdot}{\cdot}_{H}$ for the inner product on Hilbert space $H$ and $\|x\|_{H}^2 = \ip{x}{x}_{H}$ for the corresponding norm. For $\Phi \in \cL (H_i,H_j)$ we denote by 
$\|\Phi\|_{\cL(H_i,H_j)} = \sup_{\|x\|_{H_i} \leq 1} \| \Phi(x) \|_{H_j}$ the operator norm and by $\|\Phi\|^2_{\mathcal{S}(H_i,H_j)} = \sum_{k=1}^\infty \|\Phi(e_k)\|_{H_j}^{2}$, where $e_1,e_2,... \in H_i$ is any orthonormal basis (ONB) of $H_i$,  the Hilbert-Schmidt norm of $\Phi$. It is well known that this norm is independent of the choice of the basis. Furthermore, with the inner product $\langle \Phi,\Theta\rangle_{\cS(H_1,H_2)}=\sum_{k\geq 1}\langle \Phi(e_k), \Theta(e_k)\rangle_{H_2}$ the space $\cS(H_1,H_2)$ is again a separable Hilbert space. For simplifying the notation we use $\cL_{ij}$ instead of $\cL(H_i,H_j)$ and in the same spirit $\cS_{ij}$, $\|\cdot\|_{\cL_{ij}}$, $\|\cdot\|_{\cS_{ij}}$ and $\langle\cdot,\cdot\rangle_{\cS_{ij}}$. 

All random variables appearing in this paper will be assumed to be defined on some common probability space $(\Omega,\mathcal{A},P)$. A random element $X$ with values in $H$ is said to be in $L_{H}^p$ if $\nu_{p,H}(X) := (E\|X\|_{H}^p )^{1/p} < \infty$. More conveniently we shall say that $X$ has $p$ moments. If $X$ possesses a first moment, then $X$ possesses a mean $\mu$, determined as the unique element for which $E\langle X,x\rangle_H=\langle \mu,x\rangle_H$, $\forall x\in H$. For $x \in H_i$ and $y \in H_j$ let $x \otimes y : H_i \rightarrow H_j$ be an operator defined as $ x \otimes y (v) = \ip{x}{v} y$.  If $X\in L_H^2$, then it possesses a covariance operator $C$, given by $C=E[(X-\mu)\otimes (X-\mu)]$. It can be easily seen that $C$ is a Hilbert-Schmidt operator. Assume $X,Y\in L_H^2$. Following Bosq~\cite{b00}, we say that $X$ and $Y$ are orthogonal ($X\perp Y$) if $EX\otimes Y=0$. A sequence of orthogonal elements in $H$ with a constant mean and constant covariance operator is called {\em $H$--white noise.}

\subsection{Setup}\label{s:setup}

We consider the general regression problem \eqref{eq:model} for {\em fully observed data}. Let us collect our main assumptions.\medskip

\noindent
{\bf (A):} {\em We have $\Psi\in \cL_{12}$. Further $\{\varepsilon_k\}$ and $\{X_k\}$ are zero mean variables which are assumed to be {\em $L^4$--$m$--approximable} in the sense of H\"ormann and Kokoszka~\cite{hk10} (see below). In addition $\{\varepsilon_k\}$ is $H_2$--white noise. For any $k\geq 1$ we have $X_k\perp \varepsilon_k$.}\medskip

Here is the weak dependence concept that we impose.
%Weak dependence is describe as small $L_p$ distance between the process and it's approximation based on only $m$ last innovations. This idea is specified in following two definitions.
%
%\begin{defi}\label{def:mdep}
%Suppose $(X_n)_{n\geq1}$ is the random process with values in $H$ and let $\cF_n^- = \sigma(...,X_{n-2},X_{n-1},X_n)$ and $\cF_n^+ = \sigma(X_n,X_{n+1},X_{n+2},...)$ be the $\sigma$-algebras generated by terms up to time $n$ and after time $n$ respectively. Process $X_n$ is said to be $m$-dependent if $\cF_n^-$ and $\cF_{n+m}^+$ are independent.
%\end{defi}
%Although processes used in practice usually do not have the proporty from Definition \ref{def:mdep}, they can be often approximated by such series. This motivates the following approach to week dependence
\begin{defi}[H\"ormann and Kokoszka~\cite{hk10}]\label{def:mapr}
A random sequence $\{X_n\}_{n\geq1}$ with values in $H$ is called $L^p$--$m$--approximable, if it can be represented as 
\begin{align*}
X_n = f(\delta_n,\delta_{n-1},\delta_{n-2},...),
\end{align*}
where the $\delta_i$ are iid elements taking values in a measurable space $S$ and $f$ is a measurable function $f : S^\infty \rightarrow H$. Moreover, if $\delta'_i$ are independent copies of $\delta_i$ defined on the same probability space, then for
\begin{align*}
X_n^{(m)} = f(\delta_n,\delta_{n-1},\delta_{n-2},...,\delta_{n-m+1},\delta'_{n-m},\delta'_{n-m-1},...)
\end{align*}
we have
\begin{align*}
\sum_{m=1}^\infty \nu_{p,H}(X_m - X_m^{(m)}) < \infty.
\end{align*}
\end{defi}

Evidently, i.i.d.\ sequences with finite $p$-th moments are $L^p$--$m$--approximable. This leads to the classical functional linear model.  But it is also easily checked that functional linear processes fit in this framework. More precisely, if $X_n$ is of the form
$$
X_n=\sum_{k\geq 0}b_k(\delta_{n-k}),
$$ 
where $b_k:H_0\to H_1$ are bounded linear operators such that $\sum_{m\geq 1}\sum_{k\geq m}\|b_k\|_{\mathcal{L}_{01}}<\infty$, and $(\delta_n)$ is i.i.d.\ noise with $\nu_{p,H_0}(\delta_0)<\infty$, then $\{X_n\}$ is $L^p$--$m$--approximable. Other (also non-linear) examples of functional time series covered by $L^p$--$m$--approximability can be found in \cite{hk10}. 

A very important example included in our framework is the {\em autoregressive Hilbertian model of order 1} (ARH(1)) given by the recursion $X_{k+1}=\Psi(X_k)+\varepsilon_{k+1}$. It will be treated in more detail in Section~\ref{ss:results}. 

The notion of $L^4$--$m$--approximability implies that the process is stationary and ergodic and that it has finite forth moments. The latter is in line with existing literature. We are not aware of any article that works with less than 4 moments. In contrast, for several consistency results finite moments of all orders (or even bounded random variables) are assumed. Since our estimator below is a moment estimator, based on second order moments, one could be tempted to believe that some of our results may be deduced directly from the ergodic theorem under finite second moment assumptions. We will explain in the next section, after introducing the estimator, why this line of argumentation is not working.

Our weak dependence assumption implies that a possible non-zero mean of $X_k$ can be estimated consistently by the sample mean. Moreover we have (see \cite{hk12})
$$
\sqrt{n}\|\bar{X}-\mu\|_{H_1}=O_P(1).
$$
We conclude that the mean can be accurately removed in a preprocessing step and that $EX_k=0$ is not a stringent assumption. Since by Lemma~2.1 in \cite{hk10} $\{Y_k\}$ will also be $L^4$--$m$--approximable, the same argument justifies that we study a linear model without intercept.

\subsection{The estimator}
\label{ss:estimator}
The PC based estimator for $\Psi$ described below was first studied by Bosq~\cite{bosq:1991} and is based on a finite basis approximation. To achieve optimal approximation in finite dimension, one chooses eigenfunctions of the covariance operator $C= \E [X_1 \otimes X_1]$ as a basis. %Another expansion based on {\em predictive factors} has been proposed by Kargin and Onatski~\cite{kargin:onatski:2008}. Here the intention is to use directions that minimize the prediction error in the autoregressive context. 
Let $\Delta = \E[ X_1 \otimes Y_1]$. By Assumption {\bf (A)} both, $\Delta$ and $C$, are Hilbert-Schmidt operators.
Let ($\lambda_i, v_i)_{i \geq 1}$ be the eigenvalues and corresponding eigenfunctions of the operator $C$, such that $\lambda_1 \geq \lambda_2 \geq ... $. The eigenfunctions are orthonormal and those belonging to a non-zero eigenvalue form an orthonormal basis of $\overline{\Ima(C)}$, the closure of the image of $C$. Note that, with probability one, we have $X\in\overline{\Ima(C)}$. Since $\overline{\Ima(C)}$ is again a Hilbert-space, we can assume that $H_1=\overline{\Ima(C)}$, i.e.\ that the operator is of full rank. In this case all eigenvalues are strictly positive. Using linearity of $\Psi$ and the requirement $X_k\perp\varepsilon_k$ from {\bf (A)} we obtain
\begin{align*}
\Delta(v_j) &= \E\ip{X_1}{v_j}_{H_1} Y_1= \E\ip{X_1}{v_j}_{H_1}\Psi(X_1) + \E\ip{X_1}{v_j}_{H_1}\varepsilon_1\\
&=\Psi(\E\ip{X_1}{v_j}_{H_1}X_1)= \Psi(C(v_j))= \lambda_j\Psi(v_j).
\end{align*}
Then, for any
$x \in H_1$, the derived equation leads to the representation
\begeq\label{rep}
\Psi(x) = \Psi\Bigg(\sum_{j=1}^\infty \ip{v_j}{x} v_j\Bigg) = \sum_{j=1}^\infty \frac{\Delta(v_j)}{\lambda_j} \ip{v_j}{x}.\label{psiniceform}
\eneq
Here we assume implicitly that $\mathrm{dim}(H_1)=\infty$. If $\mathrm{dim}(H_1)=M<\infty$, then \eqref{rep} still holds with $\infty$ replaced by $M$. This case is well understood and will therefore be excluded.

Equation \eqref{psiniceform} gives a core idea for estimation of $\Psi$. We will estimate $\Delta$, $v_j$ and $\lambda_j$ from our sample $X_1,\ldots,X_n,Y_1,\ldots,Y_n$ and substitute the estimators into formula \eqref{psiniceform}. The estimated eigenelements $(\hat\lambda_{j,n},\hat v_{j,n};\, 1\leq j\leq n)$ will be obtained from
the empirical covariance operator
\begin{equation*}
\hat C_n=\frac{1}{n}\sum_{k=1}^n X_k\otimes X_k.
\end{equation*}
In a similar straightforward manner we set
\begin{equation*}
\hat \Delta_n=\frac{1}{n}\sum_{k=1}^n X_k\otimes Y_k.
\end{equation*}
For ease of notation, we will suppress in the sequel the dependence on the sample size $n$ of these estimators.

Apparently, from the finite sample we cannot estimate the entire sequence $(\lambda_j,v_j)$, rather we have to work with a truncated version. This leads to
\begeq\label{psi_est}
\hat \Psi_{K}(x)=\sum_{j=1}^{K} \frac{\hat\Delta(\hat v_j)}{\hat \lambda_j} \ip{\hat v_j}{x},
\eneq 
where the choice of $K=K_n$ is crucial. Since we want our estimator to be consistent, $K_n$ has to grow with the sample size to infinity. On the other hand, we know that $\lambda_j\to 0$. Hence, it will be a delicate issue to control the behavior of $\frac{1}{\hat{\lambda}_j}$. A small error in the estimation of $\lambda_j$ can have an enormous impact on \eqref{psi_est}. 

Define $\Psi_K(x)=\sum_{j=1}^{K} \frac{\Delta(v_j)}{\lambda_j} \ip{v_j}{x}$. Via the ergodic theorem one can show that the individual terms $\hat\lambda_j$, $\hat v_j$ and $\hat\Delta$ in \eqref{psi_est} converge to their population counterparts. It follows that $\|\hat\Psi_K-\Psi_K\|_{\mathcal{L}_{12}}\to 0$ a.s., as long as $K$ is fixed. In fact, this holds true under finite second moments. However, as it is well known, the ergodic theorem doesn't assure rates of convergence. Even if the underlying random variables were bounded, convergence can be arbitrarily slow. Consequently, we cannot let $K$ grow with the sample size in this approach. We need to impose further structure on the dynamics of the process and existence of higher order moments. Both are combined in the concept of $L^4$--$m$--approximability.

In most existing papers determination of $K_n$ is related to the decay-rate of $\{\lambda_j\}$. For example, Cardot et al.~\cite{cardot:ferraty:sarda:1999} assume that $n\lambda_{K_n}^4\to\infty$ and $n\lambda_{K_n}^2 / (\sum_{j=1}^{K_n}\frac{1}{\alpha_j})^2\to \infty$, when 
\begin{equation}\label{alpha}
\alpha_1=\lambda_1-\lambda_2\quad \text{and}\quad\alpha_j=\min\{\lambda_{j-1}-\lambda_j,\lambda_j-\lambda_{j+1}\},\quad j>1.
\end{equation} 
Similar requirements are used in Bosq~\cite{b00} (Theorem 8.7) or Yao et al.~\cite{yao:muller:wang:2005} (Assumption (B.5)). 
Hall and Horowitz~\cite{hall:horowitz:2007} assume in the scalar response model that $\alpha_j\geq C^{-1}j^{-\alpha-1}$, $|\Delta(v_j)\lambda_j^{-1}|\leq C j^{-\beta}$ for some $\alpha>1$ and $\frac{1}{2}\alpha+1<\beta$. Here $C$ is a constant arising from the additional assumption $E\langle X_1,v_j\rangle^4\leq C\lambda_j^2$. They emphasize the importance of a sufficient separation of the eigenvalues for their result. Then, within this setup, optimal minimax bounds are proven to hold for $K=n^{1/(\alpha+2\beta)}$. Of course, in practice this choice of $K$ is only possible under the unrealistic assumption that we know $\alpha$ and $\beta$.
Cai and Zhou~\cite{cai:zhou:2008} modify the approach by Hall and Horowitz~\cite{hall:horowitz:2007} by proposing an adaptive choice of $K$ which is based on a block thresholding technique. They recover the optimal rates of Hall and Horowitz~\cite{hall:horowitz:2007}, but need to impose further technical assumptions. Among others,  the assumptions in \cite{hall:horowitz:2007} are strengthened to $E\|X_k\|^p<\infty$ for all $p>0$,   $j^{-\alpha}\ll\lambda_j\ll j^{-\alpha}$, and $\alpha_j\gg j^{-\alpha-1}$. Here $a_n\ll b_n$ means that $\limsup_n |a_n/b_n|<\infty$.  Rates of convergence are also obtained in Cardot and Johannes~\cite{cardot:johannes:2010}. They propose a new class of estimators which are based on projecting on some fixed orthonormal basis instead on empirical eigenfunctions. Again, the accuracy of the estimator relies on a thresholding technique, and similar as to the afore cited papers, the very strong results are at the price of several technical constraints.

\subsection{Consistency results}
\label{ss:results}

The papers cited in the previous paragraph are focus on rates of consistency for the estimator $\hat\psi_K$. These important and interesting need to impose technical assumptions on the operator $\Psi$ and the spectrum of $C$. In practice, such technical conditions cannot be checked and may be violated. Furthermore, since we have no knowledge of $\alpha_j$ and $\lambda_j$, $j\geq 1$, determination of $K$ has to be done heuristically. It then remains open if the widely used PC based estimation methods stay consistent in the case where some of these conditions are violated. Our theorems below show that the answer to this question is affirmative, even if data are dependent. We propose a selection of $K_n$ which is data driven and can thus be practically implemented. 
%For our first result, Theorem~\ref{thm:main}, the sole assumptions on the spectrum of $C$ and on $\Psi$ are:\medskip
%
%\noindent
%{\bf (B):} {\em The eigenvalues $\{\lambda_j\}$  are mutually distinct and $\Psi$ is a Hilbert-Schmidt operator.}\medskip
%
%
%Assuming distinct eigenvalues is standard in functional data analysis and is commonly used for results involving functional principal components. Without this assumption the eigenfunctions in representation \eqref{rep} are no longer identifiable. In contrast to existing literature, we don't require a particular separation of the eigenvalues. In Theorem~\ref{thm:convX} we will show that we can completely avoid Assumption {\bf (B)}.
The $K_n$ we use in first result, Theorem~\ref{thm:main} below, is given as follows:\medskip

\noindent
{\bf (K):} {\em Let $m_n\to\infty$ such that $m_n^6=o(n)$. Then we define $K_n =\min(B_n,E_n,m_n)$
where $B_n =\arg\max \{j \geq 1 | \hat{\lambda}_j^{-1} \leq m_n \}$ and $E_n = \arg\max\{k \geq 1 |  \max_{1 \leq j \leq k}\hat{\alpha}_j^{-1} \leq m_n\}$.
Here $\hat\lambda_j$ and $\hat\alpha_j$ are the estimates for $\lambda_j$ and $\alpha_j$ (given in \eqref{alpha}), respectively, obtained from $\hat C$.}\medskip

A discussion on the tuning parameter $m_n$ is given at the end of this section.
The choice of $K_n$ is motivated by a `bias variance trade-off' argument. If an eigenvalue is very small (in our case $\ll 1/m_n$) it means that the direction it explains has only small influence on the representation of $X_k$. Therefore, excluding it from the representation of $\Psi$ will not cause a big bias, whereas it will considerably reduce the variance. It will be only included if the sample size is big enough, in which case we can hope for a reasonable accuracy of $\hat\lambda_j$. %Since all the quantities involved can be computed from the sample, our procedure can be a fast alternative to cross-validation or AIC type criteria as suggested in \cite{yao:muller:wang:2005} for the choice of $K$ in practical applications. 
In practice it is recommended to replace $\frac{1}{\hat{\lambda}_j}$ in the definition of $B_n$ by $\frac{\hat\lambda_1}{\hat{\lambda}_j}$ and $\frac{1}{\hat{\alpha}_j}$ in the definition of $E_n$ by $\frac{\hat\lambda_1}{\hat{\alpha}_j}$ to adapt for scaling. For the asymptotics such a modification has no influence.

\begin{thm}\label{thm:main}
Consider the linear Hilbertian model \eqref{eq:model} and assume that Assumption~{\bf (A)} and {\bf (K)} hold. Suppose further that the eigenvalues $\{\lambda_j\}$  are mutually distinct and $\Psi$ is a Hilbert-Schmidt operator. Then the estimator described in Section~\ref{ss:estimator} is weakly consistent, i.e.
$
\|\hat{\Psi}_{K_n} - \Psi\|_{\cL_{12}}\convP 0, 
$
if $n\to \infty$.
\end{thm}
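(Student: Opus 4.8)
The plan is to decompose the error $\|\hat\Psi_{K_n}-\Psi\|_{\cL_{12}}$ into a bias term and a stochastic term,
\[
\|\hat\Psi_{K_n}-\Psi\|_{\cL_{12}}\le \|\hat\Psi_{K_n}-\Psi_{K_n}\|_{\cL_{12}}+\|\Psi_{K_n}-\Psi\|_{\cL_{12}},
\]
and to treat the two pieces separately. The bias term $\|\Psi_{K_n}-\Psi\|_{\cL_{12}}$ is the tail $\|\sum_{j>K_n}\lambda_j^{-1}\Delta(v_j)\otimes v_j\|$; since $\Psi$ is Hilbert–Schmidt, $\sum_j\|\lambda_j^{-1}\Delta(v_j)\|_{H_2}^2=\|\Psi\|_{\cS_{12}}^2<\infty$, so the tail sum of this series tends to $0$ deterministically as the (random but $\to\infty$) index $K_n$ grows. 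The key preliminary fact one needs here is that $K_n\convP\infty$, which follows from assumption {\bf (K)}: since $m_n\to\infty$ and the estimated eigen- and gap-quantities converge to their positive population counterparts (the $\lambda_j$ are distinct, so the $\alpha_j$ are strictly positive for each fixed $j$), for any fixed $J$ we have $\hat\lambda_J^{-1}\le m_n$ and $\max_{j\le J}\hat\alpha_j^{-1}\le m_n$ with probability tending to $1$, whence $B_n\ge J$ and $E_n\ge J$; combined with $m_n\to\infty$ this gives $P(K_n\ge J)\to1$.

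For the stochastic term I would expand
\[
\hat\Psi_{K_n}-\Psi_{K_n}=\sum_{j=1}^{K_n}\left(\frac{\hat\Delta(\hat v_j)}{\hat\lambda_j}\otimes\hat v_j-\frac{\Delta(v_j)}{\lambda_j}\otimes v_j\right)
\]
and control each summand by the standard perturbation bounds for empirical eigenelements: $\|\hat C-C\|_{\cS_{11}}$ and $\|\hat\Delta-\Delta\|_{\cS_{12}}$ are $O_P(n^{-1/2})$ — this is exactly where $L^4$–$m$–approximability is used, via the CLT/moment bounds in H\"ormann and Kokoszka~\cite{hk10} for the sums $n^{-1}\sum X_k\otimes X_k$ and $n^{-1}\sum X_k\otimes Y_k$ — together with $|\hat\lambda_j-\lambda_j|\le\|\hat C-C\|_{\cL_{11}}$ and the Lidskii/Davis–Kahan-type estimate $\|\hat v_j-v_j\|_{H_1}\le 2\sqrt2\,\alpha_j^{-1}\|\hat C-C\|_{\cL_{11}}$ (with a sign convention on $\hat v_j$). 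Writing the difference in each term over a common denominator, the contribution of the $j$-th term is bounded, up to constants, by a quantity of the order
\[
\frac{\|\hat\Delta-\Delta\|}{\hat\lambda_j}+\frac{\|\Delta\|\,|\hat\lambda_j-\lambda_j|}{\hat\lambda_j\lambda_j}+\frac{\|\Delta(v_j)\|}{\lambda_j}\|\hat v_j-v_j\|,
\]
and on the event defining {\bf (K)} one has $\hat\lambda_j^{-1}\le m_n$ and $\hat\alpha_j^{-1}\le m_n$ for all $j\le K_n$; feeding in $\lambda_j^{-1}\le 2\hat\lambda_j^{-1}\le 2m_n$ and $\alpha_j^{-1}\le 2m_n$ on that event, and summing over $j\le K_n\le m_n$, yields a bound of order $m_n^3\|\hat C-C\|+m_n^3\|\hat\Delta-\Delta\|=O_P(m_n^3 n^{-1/2})=o_P(1)$ by the assumption $m_n^6=o(n)$. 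One must be slightly careful that the event $\{\,\forall j\le K_n:\hat\lambda_j^{-1}\le m_n,\ \hat\alpha_j^{-1}\le m_n\}$ has probability one by the very definition of $K_n=\min(B_n,E_n,m_n)$, so no extra probabilistic cost is incurred there.

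The main obstacle is the bias term: because $K_n$ is random and only the convergence $K_n\convP\infty$ (not a rate) is available, one cannot simply say "$K_n\to\infty$ hence the tail $\to0$" pointwise. The clean way around this is to note that $r_J:=\|\sum_{j>J}\lambda_j^{-1}\Delta(v_j)\otimes v_j\|_{\cS_{12}}$ is a deterministic sequence with $r_J\downarrow 0$, so for any $\eta>0$ pick $J$ with $r_J<\eta$; then $P(\|\Psi_{K_n}-\Psi\|_{\cL_{12}}>\eta)\le P(\|\Psi_{K_n}-\Psi\|_{\cS_{12}}>\eta)\le P(K_n<J)\to0$. A secondary technical point worth spelling out is the handling of the sign indeterminacy of eigenfunctions (one redefines $\hat v_j$ to have $\ip{\hat v_j}{v_j}\ge0$, which does not affect $\hat v_j\otimes\hat v_j$ and hence not $\hat\Psi_{K_n}$), and verifying that the perturbation inequality for $\hat v_j$ only requires the gap $\alpha_j$ and not a uniform lower bound on eigenvalue separation — it is precisely this localization that lets assumption {\bf (K)} replace the global spectral conditions used in earlier work. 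Assembling the two bounds gives $\|\hat\Psi_{K_n}-\Psi\|_{\cL_{12}}\convP0$.
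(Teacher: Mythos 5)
Your proposal is correct and follows essentially the same route as the paper: the same split into the Hilbert--Schmidt tail $\Psi-\Psi_{K_n}$ (killed by $K_n\convP\infty$ plus monotone decay of the tail sums) and the three stochastic pieces coming from $\hat\Delta-\Delta$, $\hat\lambda_j^{-1}-\lambda_j^{-1}$ and $\hat v_j-v_j$, each controlled by the $O_P(n^{-1/2})$ moment bounds on $\|\hat C-C\|$ and $\|\hat\Delta-\Delta\|$ from $L^4$--$m$--approximability together with the cutoffs $\hat\lambda_j^{-1},\hat\alpha_j^{-1}\le m_n$ built into {\bf (K)}, giving the same $O_P(m_n^3 n^{-1/2})$ rate. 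The only point stated too casually is the transfer $\lambda_j^{-1}\le 2\hat\lambda_j^{-1}$, which does not hold on the {\bf (K)} event alone but only after intersecting with the high-probability event $\{\|\hat C-C\|_{\cL_{11}}\le 1/(2m_n)\}$ --- exactly the splitting the paper carries out in its bounds for $S_2$ and $S_3$ --- so this is an expository omission rather than a gap.
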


It is not hard to see that consistent estimation of $\Psi$ via the PCA approach requires compactness of the operator. As a simple example suppose that $\Psi$ is the identity operator, which is not Hilbert-Schmidt anymore. Then for any ONB $\{v_i\}$ we have $\Psi=\sum_{i\geq 1}v_i\otimes v_i$. Even if from the finite sample our estimators for $v_1,\ldots, v_K$ would be perfect ($v_i=\hat v_i$) we have $\|\Psi-\hat \Psi_K\|_{\cL_{12}}=1$ for any $K\geq 1$. This is easily seen by evaluating $\Psi$ and $\hat \Psi_K$ at $v_{K+1}$.

In our next theorem we show that if our target is prediction, then we can further simplify the assumptions. In this case we will be satisfied if  $\|\Psi(X_n)-\hat\Psi(X_n)\|_{H_2}$ is small. E.g., if $\langle X_n,v\rangle=0$ with probability one, then the direction $v$ plays no role for describing $X_n$ and a larger value of $\|\Psi(v)-\hat\Psi(v)\|_{H_2}$ is not relevant.

\begin{thm}\label{thm:convX} Let Assumption {\bf (A)} hold and define the estimator $\hat\Psi_{K_n}$ as in Section~\ref{ss:estimator} with $K_n = \arg\max \{j \geq 1 | \ \hat\lambda_1/\hat{\lambda}_j \leq m_n \}$, where $m_n\to\infty$ and $m_n = o(\sqrt{n})$. Then  
$
\|\Psi(X_n) - \hat\Psi_{K_n}(X_n)\|_{H_2} \xrightarrow{\Prob} 0.
$
\end{thm}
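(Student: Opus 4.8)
The plan is to split the prediction error into a ``truncation/bias'' part and a ``noise/variance'' part by means of the operator identity $\hat\Delta=\Psi\hat C+\hat\Delta^{\varepsilon}$, where $\hat\Delta^{\varepsilon}=\frac1n\sum_{k=1}^{n}X_k\otimes\varepsilon_k$; this is immediate from $Y_k=\Psi(X_k)+\varepsilon_k$ and linearity and continuity of $\Psi$. Writing $\hat\pi_K=\sum_{j=1}^{K}\hat v_j\otimes\hat v_j$, $\pi_L=\sum_{j=1}^{L}v_j\otimes v_j$, and $\hat C_K^{\dagger}=\sum_{j=1}^{K}\hat\lambda_j^{-1}\,\hat v_j\otimes\hat v_j$, one has $\hat\Psi_K=\hat\Delta\,\hat C_K^{\dagger}$ and $\hat C\,\hat C_K^{\dagger}=\hat\pi_K$, so that the identity gives $\hat\Psi_K=\Psi\hat\pi_K+\hat\Delta^{\varepsilon}\hat C_K^{\dagger}$ and hence, pathwise and for every admissible value of the (for large $n$ finite) random index $K_n$,
\begin{equation*}
\Psi(X_n)-\hat\Psi_{K_n}(X_n)=\Psi\big((I-\hat\pi_{K_n})X_n\big)-\hat\Delta^{\varepsilon}\big(\hat C_{K_n}^{\dagger}X_n\big).
\end{equation*}
I would bound the two terms separately, using four consequences of Assumption~{\bf (A)}: $\|\hat C-C\|_{\cS_{11}}=O_P(n^{-1/2})$; $\|\hat\Delta^{\varepsilon}\|_{\cS_{12}}=O_P(n^{-1/2})$, valid because $E[X_1\otimes\varepsilon_1]=0$ (this is exactly $X_1\perp\varepsilon_1$), so $\{X_k\otimes\varepsilon_k\}$ is a centred $L^2$--$m$--approximable sequence in $\cS_{12}$, cf.\ \cite{hk10}; $\|X_n\|_{H_1}=O_P(1)$ by stationarity; and $\hat\lambda_1\convP\lambda_1>0$ by Weyl's inequality.

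For the variance term, the choice of $K_n$ forces $\hat\lambda_{K_n}\ge\hat\lambda_1/m_n$, hence, by Bessel's inequality,
\begin{equation*}
\|\hat C_{K_n}^{\dagger}X_n\|_{H_1}^{2}=\sum_{j=1}^{K_n}\hat\lambda_j^{-2}\,\ip{\hat v_j}{X_n}^{2}\ \le\ \hat\lambda_{K_n}^{-2}\,\|X_n\|_{H_1}^{2}\ \le\ \frac{m_n^{2}}{\hat\lambda_1^{2}}\,\|X_n\|_{H_1}^{2},
\end{equation*}
so $\|\hat\Delta^{\varepsilon}(\hat C_{K_n}^{\dagger}X_n)\|_{H_2}\le\|\hat\Delta^{\varepsilon}\|_{\cL_{12}}\,m_n\,\hat\lambda_1^{-1}\,\|X_n\|_{H_1}=O_P(m_n/\sqrt n)$, which is $o_P(1)$ precisely because $m_n=o(\sqrt n)$. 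This is the only point at which the growth restriction on $m_n$ enters.

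For the bias term it suffices, since $\Psi$ is bounded, to show $\|(I-\hat\pi_{K_n})X_n\|_{H_1}\convP 0$. Fix $\epsilon>0$. Since the $\lambda_j$ are strictly positive and decrease to $0$, the spectrum of $C$ is strictly separated at infinitely many indices, so one may choose $L$ with $\lambda_L>\lambda_{L+1}$ and in addition $\sum_{j>L}\lambda_j<\epsilon^{3}$; put $\delta=\lambda_L-\lambda_{L+1}>0$. On the event $\{\|\hat C-C\|_{\cL_{11}}<\delta/3\}\cap\{\hat\lambda_1/m_n<\lambda_L/2\}$, whose probability tends to $1$ (using $\hat\lambda_1/m_n\convP 0$), one has $\hat\lambda_L>\lambda_L/2>\hat\lambda_1/m_n$, hence $K_n\ge L$ and therefore $\Ima(\hat\pi_L)\subseteq\Ima(\hat\pi_{K_n})$, which yields
\begin{equation*}
\|(I-\hat\pi_{K_n})X_n\|_{H_1}\ \le\ \|(I-\hat\pi_L)X_n\|_{H_1}\ \le\ \|(I-\hat\pi_L)\pi_L\|_{\cL_{11}}\|X_n\|_{H_1}+\|(I-\pi_L)X_n\|_{H_1}.
\end{equation*}
Because $(I-\hat\pi_L)\pi_L=(I-\hat\pi_L)(\pi_L-\hat\pi_L)$, a Davis--Kahan type perturbation bound for the spectral projection attached to the gap $\delta$ (cf.\ Bosq~\cite{b00}) gives $\|(I-\hat\pi_L)\pi_L\|_{\cL_{11}}\le\|\hat\pi_L-\pi_L\|_{\cL_{11}}=O_P(\|\hat C-C\|_{\cL_{11}})=o_P(1)$, while $P(\|(I-\pi_L)X_n\|_{H_1}>\epsilon)\le\epsilon^{-2}\sum_{j>L}\lambda_j<\epsilon$ by Markov's inequality and stationarity. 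Collecting terms, $\limsup_n P(\|(I-\hat\pi_{K_n})X_n\|_{H_1}>2\epsilon)\le\epsilon$; letting $\epsilon\downarrow 0$ gives $\|(I-\hat\pi_{K_n})X_n\|_{H_1}\convP 0$, and combined with the variance bound this proves the assertion.

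I expect the bias term to be the main obstacle, since there one must handle simultaneously the random, sample-dependent truncation $K_n$ — dealt with via the monotonicity $\Ima(\hat\pi_L)\subseteq\Ima(\hat\pi_{K_n})$ on $\{K_n\ge L\}$ and by conditioning on a high-probability event — and the fact that $\hat\pi_L$ approximates the true eigenprojection $\pi_L$ only at the rate dictated by the spectral gap at $L$. It is exactly here that one sees why neither the Hilbert--Schmidt hypothesis on $\Psi$ nor separation of \emph{all} the eigenvalues is needed for prediction: the argument uses only a single spectral gap at a level $L$ that we are free to choose, and such a gap always exists; the (possibly poor) behaviour of $\hat\Psi_{K_n}$ along the low-variance directions of $X_n$ — precisely where the lack of structure would otherwise bite — does not affect $\|\Psi(X_n)-\hat\Psi_{K_n}(X_n)\|_{H_2}$.
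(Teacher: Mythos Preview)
Your proof is correct. The decomposition $\Psi(X_n)-\hat\Psi_{K_n}(X_n)=\Psi\big((I-\hat\pi_{K_n})X_n\big)-\hat\Delta^{\varepsilon}\big(\hat C_{K_n}^{\dagger}X_n\big)$ is exactly the one the paper derives (their $\hat\Lambda=-\hat\Delta^{\varepsilon}$, cf.\ equation~\eqref{eq:dpX}), and your treatment of the noise term via $\hat\lambda_{K_n}\ge\hat\lambda_1/m_n$ together with $\|\hat\Delta^{\varepsilon}\|_{\cS_{12}}=O_P(n^{-1/2})$ coincides with the paper's argument (Lemma~\ref{Lemma1} applied with $\Psi\equiv 0$, then the mechanics of Lemma~\ref{lem:d1}).

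Where you genuinely diverge is in the bias term $\|(I-\hat\pi_{K_n})X_n\|_{H_1}$. The paper controls this through a chain of five lemmas (Lemmas~\ref{lem:vjconv}--\ref{lem:Mhat}): it introduces a \emph{random} intermediate level $L_n=\arg\max\{r\le K:\sum_{i\le r}(\hat\lambda_{K+1}-\hat\lambda_i)^{-2}\le\xi_n\}$ depending on an auxiliary sequence $\xi_n$, proves $L_n\convP\infty$, and then compares $P_{\hat M_K}$ to $P_{M_K}$ via a hand-made eigenprojection bound (Lemma~\ref{lem:vjconv}) summed up to $L_n$. Your route is shorter and more transparent: for each $\epsilon$ you fix a \emph{deterministic} $L$ with a genuine spectral gap $\lambda_L>\lambda_{L+1}$ and small tail $\sum_{j>L}\lambda_j<\epsilon^{3}$, show $K_n\ge L$ on a high-probability event, use monotonicity of projections to pass from $K_n$ to $L$, and then invoke a single Davis--Kahan bound at the fixed gap. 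This avoids both the random $L_n$ construction and the custom perturbation lemma, and it makes explicit why only \emph{one} spectral gap (not distinctness of all eigenvalues) is needed. The paper's approach, on the other hand, is more self-contained --- it does not appeal to Davis--Kahan as a black box --- and its random $L_n$ adapts automatically to the empirical gap structure, which may be of independent interest if one later wants rates. Both arguments hinge on the same two facts you identify: $\|\hat C-C\|_{\cL_{11}}=O_P(n^{-1/2})$ and the existence of arbitrarily large indices with a strict eigenvalue gap.
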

\begin{rmk}
For our proof it will not be important to evaluate $\Psi$ and $\hat\Psi$ at $X_n$. We could equally well use $X_1$, or $X_{n+1}$, or some arbitrary variable $X\isd X_1$.
\end{rmk}

Theorem~\ref{thm:convX} should be compared to Theorem~3 in Crambes and Mas~\cite{crambes:mas:2012} where an asymptotic expansion of 
$
E\|\Psi(X_{n+1}) - \hat\Psi_{k}(X_{n+1})\|_{H_2}^2 
$
is obtained (for fixed $k$). Their result implies consistency, but requires again assumptions on the decay rate of $\{\lambda_i\}$, an operator $\Psi$ that is Hilbert-Schmidt, and $E\|X_k\|^p<\infty$ for all $p>0$. In our theorem we need no assumptions on the eigenvalues anymore, not even that they are distinct.

%It should be noted that we are studying in this paper only convergence in probability, whereas for example \cite{b00} or \cite{cardot:ferraty:sarda:1999} have also obtained results on almost sure convergence, but for the price of further technical assumptions (e.g.\ boundedness of the $X_k$'s).

In the last theorem we saw that whenever $m_n=o(\sqrt{n})$ and $m_n\to\infty$ convergence holds. This leaves open what is a good choice of the tuning parameter $m_n$. From a practical perspective we believe that the importance of this question should not be overrated. Most applied researchers will use CV or some comparable method, which usually will give a $K_n^{\mathrm{alt}}$ that is presumably close to optimal. Hence, if we suppose that
$$
E \|\Psi(X_n) - \hat\Psi_{K_n^{\mathrm{alt}}}(X_n)\|_{H_2}\ll E\|\Psi(X_n) - \hat\Psi_{K_n}(X_n)\|_{H_2}\quad (n\to\infty),
$$
the practitioner can be sure that his approach leads to a consistent estimator under very general assumptions. In Section~\ref{s:simulation} we use for the simulations $m_n=\sqrt{n}/\log n$. The performance of this estimator is in all tested setups comparable to CV.

To address the optimality issue from a theoretical point of view seems to be very difficult and depends on our final objective: is it prediction or estimation.
In both cases we believe that results in this direction can only be realistically obtained under regularity assumptions similar to those in the above cited articles.

\subsection{Applications to functional time series}

Functional time series analysis has seen an upsurge in FDA literature, in particular the forecasting in a functional setup (see e.g.\ Hyndman and Shang~\cite{hyndman:shang:2009} or Sen and Kl\"uppelberg~\cite{sen:klueppelberg:2010}). We sketch here two possible applications in this context.

\subsubsection{FAR(1)}
Of particular importance in functional time series is the ARH(1) model of Bosq~\cite{b00}.
We show now
that our framework covers this model. With i.i.d.\ innovations $\delta_k\in L_H^4$ the process $\{X_k\}$ defined via $X_{k+1}=\Psi(X_k)+\delta_{k+1}$  is $L_H^4$--approximable if $\Psi\in \cL(H,H)$ such that $\|\Psi\|_{\cL(H,H)}<1$, see \cite{hk10}. The stationary solution for $X_k$ has the form
$$
X_k=\sum_{j\geq 0}\Psi^j(\delta_{k-j}).
$$
Setting $\varepsilon_{k}=\delta_{k+1}$ and $Y_k=X_{k+1}$ we obtain the linear model \eqref{eq:model}.  Independence of $\{\delta_k\}$ implies that $X_k\perp \varepsilon_k$ and hence
Assumption {\bf (A)} holds. Bosq~\cite{b00} has obtained a (strongly) consistent estimator of $\Psi$, if $\Psi$ is Hilbert-Schmidt and again by imposing assumptions on the spectrum of $C$.

In our approach we don't even need that the innovations $\{\delta_k\}$ are i.i.d. As long as we can assure that $\{\delta_k\}$ and $\{X_k\}$ are $L^4$--$m$--approximable we only need that
$\{\delta_k\}$ is $H$-white noise. Indeed, denoting $A^*$ the conjugate of operator $A$, we have for any $x\in H_1$ and $y\in H_2$ that
\begin{align*}
E\langle X_k,x\rangle_{H_1}\langle\varepsilon_k,y\rangle_{H_2}&=\sum_{j\geq 0}E\langle\Psi^j(\delta_{k-j}),x\rangle_{H_1}\langle\delta_{k+1},y\rangle_{H_2}\\
&=\sum_{j\geq 0}E\langle\delta_{k-j},(\Psi^j)^*(x)\rangle_{H_1}\langle\delta_{k+1},y\rangle_{H_2}=0.
\end{align*}
This shows $X_k\perp\varepsilon_k$ and Assumption~{\bf (A)} follows.

We obtain the following
\begin{cor}
Let $\{X_n\}_{n \geq 1}$ be an ARH(1) process given by the recurrence equation $X_{n+1}=\Psi(X_n)+\varepsilon_{n+1}$. 
Assume $\|\Psi\|_{\cL_{12}} < 1$. If $\{\varepsilon_i\}$ is $H$-white noise and Assumption {\bf (A)} holds, then for the estimator $\hat{\Psi}_K$ given in Theorem~\ref{thm:convX} we have $\|\Psi(X_n)-\hat\Psi_K(X_n)\|_{H_2}\convP 0$. In particular if $\{\varepsilon_i\}$ is i.i.d.\ in $L_{H}^4$, Assumption {\bf (A)} will hold.
\end{cor}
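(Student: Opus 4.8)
The corollary asserts that an ARH(1) process with $\|\Psi\|_{\cL_{12}}<1$ falls within the scope of Theorem~\ref{thm:convX}, so the plan is simply to identify the two models and to check the hypotheses; apart from one orthogonality computation this is pure bookkeeping against Assumption~{\bf (A)}. First I would rewrite the recursion $X_{n+1}=\Psi(X_n)+\varepsilon_{n+1}$ as an instance of \eqref{eq:model}: setting $Y_k:=X_{k+1}$ gives $Y_k=\Psi(X_k)+\varepsilon_{k+1}$, so the regression noise of the induced model is the once-shifted innovation. Here the regressor and response spaces coincide, $H_1=H_2=H$, hence $\Psi\in\cL(H,H)=\cL_{12}$; and since $\|\Psi\|_{\cL_{12}}<1$ we have $\|\Psi^j\|_{\cL_{12}}\le\|\Psi\|_{\cL_{12}}^{\,j}$, so $\sum_{j\ge0}\Psi^j(\varepsilon_{k-j})$ converges absolutely in $L_H^4$ and almost surely, giving the stationary solution $X_k=\sum_{j\ge0}\Psi^j(\varepsilon_{k-j})$.

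Under the hypotheses of the corollary, $\{\varepsilon_i\}$ is $H$-white noise and Assumption~{\bf (A)} holds, so $\{X_k\}$ and $\{\varepsilon_i\}$ are $L^4$-$m$-approximable; shifting the time index disturbs neither the white-noise property nor $L^4$-$m$-approximability, so the induced model inherits all of these (and in particular $\Delta$ and $C$ are Hilbert-Schmidt, while $\Psi$ itself need not be, since Theorem~\ref{thm:convX} makes no such requirement). The only clause of Assumption~{\bf (A)} not immediate for the induced model is $X_k\perp\varepsilon_{k+1}$, which is precisely the display preceding the corollary: for all $x,y\in H$,
\[ E\ip{X_k}{x}_H\,\ip{\varepsilon_{k+1}}{y}_H=\sum_{j\ge0}E\ip{\varepsilon_{k-j}}{(\Psi^j)^*(x)}_H\,\ip{\varepsilon_{k+1}}{y}_H=0, \]
since $\{\varepsilon_i\}$ being white noise forces $E[\varepsilon_i\otimes\varepsilon_{i'}]=0$ whenever $i\neq i'$ and $k+1\neq k-j$ for every $j\ge0$ (the interchange of $E$ and $\sum_j$ is routine given $\sum_j\|\Psi^j\|_{\cL_{12}}<\infty$ and $\varepsilon_0\in L_H^2$). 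Thus Assumption~{\bf (A)} is in force for the induced model and Theorem~\ref{thm:convX} applies with $K_n=\arg\max\{j\ge1 : \hat\lambda_1/\hat\lambda_j\le m_n\}$, yielding $\|\Psi(X_n)-\hat\Psi_{K_n}(X_n)\|_H\convP0$. By the Remark after Theorem~\ref{thm:convX} one may just as well evaluate at $X_{n+1}$, and then $\|X_{n+1}-\hat\Psi_{K_n}(X_n)\|_H\le\|\Psi(X_n)-\hat\Psi_{K_n}(X_n)\|_H+\|\varepsilon_{n+1}\|_H$ exhibits the one-step-ahead forecast error as the irreducible noise plus a term that is $o_P(1)$.

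For the final sentence of the corollary, suppose $\{\varepsilon_i\}$ is i.i.d.\ in $L_H^4$; I would then verify Assumption~{\bf (A)} from scratch. Boundedness of $\Psi$ is assumed; an i.i.d.\ sequence with four moments is trivially both $H$-white noise and $L^4$-$m$-approximable; the stationary process $X_k=\sum_{j\ge0}\Psi^j(\varepsilon_{k-j})$ is a functional linear process of the kind discussed after Definition~\ref{def:mapr}, with coefficient operators $b_j=\Psi^j$ satisfying $\sum_{m\ge1}\sum_{j\ge m}\|\Psi^j\|_{\cL_{12}}\le\sum_{m\ge1}\|\Psi\|_{\cL_{12}}^{\,m}/(1-\|\Psi\|_{\cL_{12}})<\infty$ because $\|\Psi\|_{\cL_{12}}<1$, so $\{X_k\}$ is $L^4$-$m$-approximable; and since $X_k$ is a measurable function of $\varepsilon_k,\varepsilon_{k-1},\dots$ only, independence of the innovations gives $X_k\perp\varepsilon_{k+1}$. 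Hence Assumption~{\bf (A)} holds and the previous paragraph applies. The only mildly delicate points in the whole argument are the two orthogonality/summability checks just displayed; the rest is a direct translation of the ARH(1) recursion into Assumption~{\bf (A)} followed by an appeal to Theorem~\ref{thm:convX}.
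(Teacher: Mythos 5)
Your proposal is correct and follows essentially the same route as the paper: identify the ARH(1) recursion with the linear model \eqref{eq:model} via $Y_k=X_{k+1}$, verify the orthogonality $X_k\perp\varepsilon_{k+1}$ from the MA($\infty$) representation, the adjoint identity and the white-noise property, and then invoke Theorem~\ref{thm:convX}; for the i.i.d.\ case you check $L^4$--$m$--approximability through the linear-process criterion with $b_j=\Psi^j$, which is the same verification the paper delegates to \cite{hk10}. The extra remarks (interchange of $E$ and $\sum_j$, the one-step-ahead forecast bound) are harmless additions, and no gap is present.
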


\begin{cor}
Let $\{X_n\}_{n \geq 1}$ be an ARH(1) process given by the recurrence equation $X_{n+1}=\Psi(X_n)+\varepsilon_{n+1}$. 
Assume $\|\Psi\|_{\cS_{12}} < 1$  and that the covariance operator related to $X_1$ has distinct eigenvalues. If $\{\varepsilon_i\}$ is $H$-white noise and {\bf (A)} and {\bf (K)} hold, then the estimator $\hat{\Psi}_K$ is consistent.
\end{cor}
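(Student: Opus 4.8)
The plan is to deduce the second corollary directly from Theorem~\ref{thm:main}, exactly as the first corollary was deduced from Theorem~\ref{thm:convX}. The point is that the ARH(1) model with the stated assumptions is a special case of the general Hilbertian linear model \eqref{eq:model}, and all hypotheses of Theorem~\ref{thm:main} are met. So first I would verify that Assumption~\textbf{(A)} holds. The operator condition $\Psi\in\cL_{12}$ is part of the hypothesis (in fact $\Psi\in\cS_{12}$, which is stronger). The approximability of $\{X_k\}$ and of $\{\varepsilon_k\}$ is assumed, as is the white noise property of $\{\varepsilon_k\}$. The only thing requiring a short argument is $X_k\perp\varepsilon_k$: since $\|\Psi\|_{\cS_{12}}<1$ gives $\|\Psi\|_{\cL_{12}}<1$, the stationary solution admits the MA$(\infty)$ representation $X_k=\sum_{j\geq0}\Psi^j(\varepsilon_{k-j})$ (here $\varepsilon_k$ plays the role of $\delta_k$ in the setup of Section~\ref{ss:results}), so with $Y_k=X_{k+1}$ we get, for any $x\in H_1$, $y\in H_2$,
\[
E\ip{X_k}{x}_{H_1}\ip{\varepsilon_k}{y}_{H_2}=\sum_{j\geq0}E\ip{\varepsilon_{k-j}}{(\Psi^j)^*(x)}_{H_1}\ip{\varepsilon_k}{y}_{H_2}=0,
\]
because $\{\varepsilon_i\}$ is $H$-white noise and every index $k-j$ in the sum is strictly less than $k$. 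Hence $X_k\perp\varepsilon_k$ and Assumption~\textbf{(A)} is in force; this is the same computation that appears just before the first corollary, only with $\varepsilon$ in place of $\delta$.

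Next I would check the remaining hypotheses of Theorem~\ref{thm:main}. Assumption~\textbf{(K)} on the data-driven choice of $K_n$ is assumed outright in the corollary. The eigenvalues $\{\lambda_j\}$ of the covariance operator $C=E[X_1\otimes X_1]$ are assumed mutually distinct. And $\Psi$ being Hilbert-Schmidt is precisely the content of $\|\Psi\|_{\cS_{12}}<1$ (a fortiori $\Psi\in\cS_{12}$). With all hypotheses verified, Theorem~\ref{thm:main} applies verbatim and yields $\|\hat\Psi_{K_n}-\Psi\|_{\cL_{12}}\convP0$, which is exactly the asserted consistency. One should perhaps remark that, as in the general setup (Section~\ref{s:setup}), one may assume $H_1=\overline{\Ima(C)}$ so that $C$ has full rank and all $\lambda_j>0$; this is harmless since $X_k\in\overline{\Ima(C)}$ almost surely and $\Psi$ restricted to this closed subspace still determines the prediction.

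There is essentially no obstacle here — the corollary is a packaging statement, and the only genuine step is the orthogonality check $X_k\perp\varepsilon_k$, which hinges on the one-step-ahead structure $Y_k=X_{k+1}$ together with the white-noise (rather than merely uncorrelated-with-past) property of the innovations. If anything is delicate, it is making sure that the stationary MA$(\infty)$ representation is legitimate, i.e.\ that the series $\sum_{j\geq0}\Psi^j(\varepsilon_{k-j})$ converges in $L^2_H$; this follows from $\sum_{j\geq0}\|\Psi^j\|_{\cL_{12}}\leq\sum_{j\geq0}\|\Psi\|_{\cL_{12}}^j<\infty$ since $\|\Psi\|_{\cL_{12}}<1$, and is already invoked in the discussion of the FAR(1) example preceding the first corollary. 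Thus the proof reduces to citing that discussion and Theorem~\ref{thm:main}.
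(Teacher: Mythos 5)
Your proof is correct and takes essentially the same route as the paper: the paper states this corollary without a separate proof, relying on the preceding FAR(1) discussion (the MA$(\infty)$ representation and the orthogonality computation establishing $X_k\perp\varepsilon_k$) together with a direct application of Theorem~\ref{thm:main}, which is exactly what you do. The only remark is that since {\bf (A)} is assumed outright in the corollary, your verification of $X_k\perp\varepsilon_k$ is strictly speaking redundant, though it usefully explains why {\bf (A)} is not vacuous in this setting.
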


We remark that employing the usual state-space representation for FAR($p$) processes these results are easily generalized to higher order FAR models.

\subsubsection{FARCH(1)}

Another possible application of our result refers to a recently introduced functional version of the celebrated ARCH model (H\"ormann et al.~\cite{hormann:horvath:reeder:2012}), which plays a fundamental role in financial econometrics. It is given by the two equations
$$
y_k(t)=\varepsilon_k(t)\sigma_k(t),\quad t\in[0,1],\,k\in\mathbb{Z}
$$ 
and
$$
\sigma^2_k(t)=\delta(t)+\int_0^1\beta(t,s)y_{k-1}^2(s)ds,\quad t\in[0,1],\,k\in\mathbb{Z}.
$$
Without going into details, let us just mention that one can write the squared observations of a functional ARCH model as an autoregressive process with innovations $\nu_k(t)=y_k^2(t)-\sigma^2_k(t)$. The new noise $\{\nu_k\}$ is no longer independent and hence the results of \cite{b00} are not applicable to prove consistency of the involved estimator for the operator $\beta$. But it is shown in \cite{hormann:horvath:reeder:2012} that the innovations of this new process form Hilbertian white noise and that the new process is $L^4$--$m$--approximable. This allows us to obtain a consistent estimator for $\beta$.\medskip

\section{Simulation study}\label{s:simulation}
%Didericksen et al.~\cite{diderickson:kokoszka:zhang:2012} have investigated in an empirical study the performance of different estimators for the regression operator $\Psi$ in the FAR(1) setup $X_{n+1}=\Psi(X_n)+\varepsilon_n$. Among others, they analyze the prediction error $\|X_{n+1}-\hat\Psi(X_n)\|_{H_2}$ for different estimators $\hat\Psi$, employing different kernel operators $\Psi$. By the smoothness of the chosen kernels, curves $X_n$ are mapped to quite smooth and flat curves $\Psi(X_n)$, even if $X_n$ is irregularly shaped. In such a setup Didericksen et al.~\cite{diderickson:kokoszka:zhang:2012} conclude that choosing $K=3$ or $K=4$ gives broadly speaking the best results among all chosen setups. We have chosen operators $\Psi$ that produce more distinctive curves $\Psi(X_n)$. In fact, since in Theorem~\ref{thm:convX} we need not impose restrictions on $\Psi$, we consciously include operators that are not Hilbert Schmidt in our study. We consider a linear model of the form $Y_n=\Psi(X_n)+\varepsilon_n$, where $X_1,\varepsilon_1,X_2,\varepsilon_2,\ldots$ are mutually independent. We shall then measure the accuracy of the estimator based on the mean squared error $\mathrm{MSE}=\|\Psi(X_{n+1})-\hat\Psi(X_{n+1})\|_{H_2}^2$, when $\Psi$ is estimated from the a sample of size $n$.  We shall see that for larger sample sizes our procedure gives $K=K_n$ that is very close to  $K=K^{CV}_n$ obtained from cross-validation. 
%\setcounter{equation}{0}
We consider a linear model of the form $Y_n=\Psi(X_n)+\varepsilon_n$, where $X_1,\varepsilon_1,X_2,\varepsilon_2,\ldots$ are mutually independent. 
 We are testing the performance of the estimator in context of prediction, i.e.\ we work under the setting of Theorem~\ref{thm:convX}. For the simulation study we obviously have to work with finite dimensional spaces $H_1$ and $H_2$. However, because of the asymptotic nature of our results, we set the dimension relatively high and define $H_1 = H_2 = \mathrm{span}\{f_j : 0 \leq j \leq 34\}$, where $f_0(t)=1$, $f_{2k-1}(t)=\sin(2\pi kt)$ and $f_{2k}(t)=\cos(2\pi kt)$ are the first 35 elements of a Fourier basis on $[0,1]$.
We work with Gaussian curves $X_i(t)$ by setting 
\begin{align}
X_i (t)= \sum_{j=0}^{34} A_i^{(j)} f_{j-1}(t)\label{eq:Xrepr},
\end{align}
where $(A_i^{(0)},A_i^{(1)},\ldots,A_{i}^{(34)})'$ are independent Gaussian random vectors with mean zero and covariance $\Sigma$.
This setup allows us to easily manipulate the eigenvalues $\{\lambda_k\}$ of a covariance operator $C_X=EX\otimes X$. Indeed, if we define $\Sigma=\mathrm{diag}(a_1,\ldots,a_{35})$, where $a_1\geq a_2\geq\cdots\geq a_k$,
then $\lambda_k=a_k$ and $v_k=f_{k-1}$ is the corresponding eigenfunction. We test three sets of eigenvalues $\{\lambda_k\}_{1 \leq k\leq 35}$:
\begin{itemize}
\item $\Lambda_1:$ $\lambda_k = c_1 \rho^{k-1}$ with $\rho=1/2$;\quad[geometric decay],
\item $\Lambda_2:$ $\lambda_k = c_2/k^2$ \quad[fast polynomial decay],
\item $\Lambda_3:$ $\lambda_k = c_3/k^{1.1}$ \quad[slow polynomial decay].
%\item $\Lambda_4:$ $\lambda_k = \rho^m$ for $k = 5m + l$ where $1 \leq l \leq 5$ \quad[stepwise decay].
\end{itemize}
To bring our data on the same scale and make results under different settings comparable we set $c_1,c_2$ and $c_3$ such that $\sum_{k=1}^{35}\lambda_k=1$. This implies $E\|X_i\|^2=1$ in all settings.
%Our sets of eigenvalues $\Lambda_i$ have to be interpreted as $\{\lambda_1,\ldots, \lambda_{35},0,0,\ldots\}$ in the infinite dimensional setting. Thus, they are all are summable, and therefore the underlying covariance operators are Hilbert-Schmidt.
The noise $\{\varepsilon_k\}$ is also assumed to be of the form \eqref{eq:Xrepr}, but now with $E\|\varepsilon_i\|^2= \sigma^{2} \in \{0.25,1,2.25,4\}$.

We test three operators, all of the form
$\Psi(x)=\sum_{i=1}^{35}\sum_{j=1}^{35}\psi_{ij}\langle x,v_i\rangle v_j$.
%\begin{itemize}
%\item $\Psi_1:$ $\psi_{ii} = 1$ and $\psi_{ij} = 0$ when $i \neq j$,
%\item $\Psi_2:$ $\psi_{ij}= 0.6185\times (ij)^{-1}$. 
%\end{itemize}
%The first operator signifies a non-compact operator, while the second adumbrates a Hilbert-Schmidt operator.
%Note that $\|\Psi_1\|_{\mathcal{L}_{12}}$ and $\|\Psi_2\|_{\mathcal{L}_{12}}$ have (up to negligible numerical rounding error in case of $\Psi_2$) norm 1. 
\begin{itemize}
\item $\Psi_1:$ for $1 \leq i,j \leq 35$ we set $\psi_{ii} = 1$ and $\psi_{ij} = 0$ when $i \neq j$,
\item $\Psi_2:$ the coefficients $\psi_{ij}$ are generated as i.i.d.\ standard normal random variables,
\item $\Psi_3:$ for $1 \leq i,j \leq 35$ we set $\psi_{ij} = \frac{1}{ij}$
\end{itemize}
We standardize the operators such that the operator norm equals one. The operators $\Psi_2$ are generated once and then fixed for the entire simulation. We generate samples of size $n+1=80\times 4^\ell$+1, $\ell=0,\ldots,4$. Estimation is based on the first $n$ observations. We run $200$ simulations for each setup $(\Lambda,\Psi,\sigma, n)$. As a performance measure for our procedure the mean squared error on the $(n+1)$-st observation 
\begin{align}
{\tt MSE}  = \frac{1}{200}\sum_{k=1}^{200} \|\Psi(X^{(k)}_{n+1}) - \hat \Psi(X^{(k)}_{n+1})\|^2_{H_2},\label{eq:MSE}
\end{align}
is used. Here $X_i^{(k)}$ is the $i$-th observation of the $k$-th simulation run. 

Now we compute the median truncation level $K$ obtained from our data-driven procedure described in Theorem~\ref{thm:convX} with $m_n=\frac{n^{1/2}}{\log n}$.
We compare it to the median truncation level obtained by cross-validation ($K^{CV}$) on the same data. To this end, we divide the sample into training and test sets in proportion $(n-n_{\mathrm{test}}):n_{\mathrm{test}}$, where $n_{\mathrm{test}}=\max\{n/10,100\}$. The estimator is obtained from the training set for different truncation levels $k=1,2,\ldots,35$. Then, from the test set we determine $K^{CV}=\mathrm{argmin}_{k\in\{1,\ldots,35\}}\sum_{\ell=n-n_{\mathrm{test}}}^n\|Y_{\ell+1}-\hat\Psi_k(X_\ell)\|_{H_2}^2$.

The {\tt MSE} and the size of $K$ and $K^{CV}$ are shown for different constellations in Table~\ref{tb:res}. We display the results only for $\sigma=1$. Not surprisingly, the bigger the variance of the noise, the bigger {\tt MSE}, but otherwise our findings were the same across all constellations of $\sigma$. The table shows that the choice of $K$ proposed by our method results in an MSE which is competitive with CV. We also see that an optimal choice of $K$ cannot be solely based on the decay of the eigenvalues as it is the case in our approach. It clearly also depends on the unknown operator itself. Not surprisingly, the best results are obtained under settings $\Lambda_1$ (exponentially fast decay of eigenvalues) and $\Psi_3$ (which is the smoothest among the three operators).

\begin{table}[h!]\label{tb:res}
\caption{Truncation levels obtained by Theorem~\ref{thm:convX} ($K$) and by cross-validation ($K^{CV}$) and corresponding {\tt MSE}. For each constellation we present $\mathrm{med}(K)$ of $200$ runs.}
\begin{center}
{\small \begin{tabular}{rr|rrrr|rrrr|rrrr}
& & \multicolumn{4}{c|}{$\Psi_1$} & \multicolumn{4}{c}{$\Psi_2$} & \multicolumn{4}{c}{$\Psi_3$}\\
& $n$ & $K^{CV}$ & MSE & $K$ & MSE & $K^{CV}$ & MSE & $K$ & MSE & $K^{CV}$ & MSE & $K$ & MSE\\
\hline
\multirow{5}{*}{$\Lambda_1$} & 80 & 1 & 1.10 & 2 & 0.96 & 1 & 0.68 & 2 & 0.69 & 1 & 0.64 & 2 & 0.66\\
 & 320 & 3 & 0.48 & 2 & 0.43 & 1 & 0.32 & 2 & 0.28 & 1 & 0.21 & 2 & 0.24\\
 & 1280 & 4 & 0.21 & 3 & 0.21 & 3 & 0.14 & 3 & 0.12 & 2 & 0.09 & 3 & 0.09\\
 & 5120 & 7 & 0.08 & 4 & 0.10 & 5 & 0.07 & 4 & 0.05 & 3 & 0.05 & 4 & 0.03\\
 & 20480 & 9 & 0.03 & 4 & 0.06 & 8 & 0.03 & 4 & 0.02 & 5 & 0.02 & 4 & 0.01\\
\hline
\multirow{5}{*}{$\Lambda_2$} & 80 & 1 & 1.00 & 1 & 0.85 & 1 & 0.82 & 1 & 0.58 & 1 & 0.56 & 1 & 0.4\\
 & 320 & 2 & 0.56 & 1 & 0.54 & 1 & 0.26 & 1 & 0.22 & 1 & 0.20 & 1 & 0.15\\
 & 1280 & 5 & 0.26 & 2 & 0.28 & 2 & 0.14 & 2 & 0.12 & 1 & 0.07 & 2 & 0.06\\
 & 5120 & 9 & 0.13 & 2 & 0.24 & 5 & 0.08 & 2 & 0.08 & 3 & 0.04 & 2 & 0.02\\
 & 20480 & 17 & 0.06 & 3 & 0.16 & 10 & 0.04 & 3 & 0.04 & 5.5 & 0.02 & 3 & 0.01\\
\hline
\multirow{5}{*}{$\Lambda_3$} & 80 & 1 & 1.60 & 2 & 1.30 & 1 & 0.78 & 1 & 0.73 & 1 & 0.71 & 1 & 0.57\\
 & 320 & 2 & 0.85 & 2 & 0.78 & 1 & 0.35 & 2 & 0.40 & 1 & 0.22 & 2 & 0.28\\
 & 1280 & 8 & 0.55 & 4 & 0.55 & 2 & 0.22 & 4 & 0.22 & 2 & 0.08 & 4 & 0.12\\
 & 5120 & 24 & 0.25 & 6 & 0.38 & 9 & 0.16 & 6 & 0.14 & 3 & 0.04 & 6 & 0.04\\
 & 20480 & 33 & 0.08 & 11 & 0.25 & 23 & 0.07 & 11 & 0.08 & 5 & 0.02 & 11 & 0.02
 
\end{tabular}}
\end{center}
\end{table}

\section{Conclusion}\label{s:conclusion}

Estimation of the regression operator in functional linear models has obtained much interest over the last years. Our objective in this paper was to show that one of most widely applied estimators in this context remains consistent, even if several of the synthetic assumptions used in previous papers are removed. If our intention is prediction, we can further simplify the technical requirements.  Our approach comes with a data driven choice of the parameter which determines the dimension of the estimator. While our main intention is to show that this choice leads to a consistent estimator, we have seen in simulations that our method is performing remarkably well when compared to cross-validation.

\section{Proofs}\label{s:proofs}

Throughout this entire section we assume the setup and notation of Section~\ref{s:setup}.

\subsection{Proof of Theorem~\ref{thm:main}}

We work under Assumptions {\bf (A)} and {\bf (K)} and assume distinct eigenvalues of the covariance operator $C$ and that $\Psi$ is Hilbert-Schmidt.
The first important lemma which we use in the proof of Theorem~\ref{thm:main} is an error bound for the estimators of the operators $\Delta$ and $C$. Below we extend results in~\cite{hk10}.
\begin{lem}\label{Lemma1}
%Let $\Delta : H_1 \rightarrow H_2$ be defined as $\Delta = \E X \otimes Y$, where $H_1,H_2$ are Hilbert spaces and let the estimator of $\Delta$ be given by $\hat{\Delta}_n = \frac{1}{n}\sum\limits_{k=1}^n X_k \otimes Y_k$. 
There is a constant $U$ depending only on the law of $\{(X_k,Y_k)\}$ such that
\begin{align*}
n\max\{
\E\|\Delta - \hat{\Delta}_n\|_{\mathcal{S}_{12}}^2,\E\|C - \hat{C}_n\|_{\mathcal{S}_{11}}^2
\} < U.
\end{align*}
\end{lem}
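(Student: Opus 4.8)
\emph{Overall plan.} The plan is to reduce the lemma to a moment bound for sample means of $L^2$--$m$--approximable sequences. I would write $\hat C_n-C=\frac1n\sum_{k=1}^nZ_k$ with $Z_k:=X_k\otimes X_k-C\in\mathcal S_{11}$ and $\hat\Delta_n-\Delta=\frac1n\sum_{k=1}^nW_k$ with $W_k:=X_k\otimes Y_k-\Delta\in\mathcal S_{12}$. Both sequences are centred and stationary, so it suffices to show that each is $L^2$--$m$--approximable and that the two quantities controlling its sample-mean variance — namely $\nu_{2}(Z_0)$ and $\sum_{m\ge1}\nu_{2}(Z_m-Z_m^{(m)})$, and the analogues for $W$ — are finite and bounded by functionals of the law of $\{(X_k,Y_k)\}$. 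Granting this, the standard estimate $n\,E\|\frac1n\sum_{k=1}^nZ_k\|^2\le\nu_{2}(Z_0)\big(\nu_{2}(Z_0)+2\sum_{m\ge1}\nu_{2}(Z_m-Z_m^{(m)})\big)$ (recalled below) gives the result with $U$ chosen slightly larger than the two bounds so obtained.

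\emph{Reducing the quadratic arrays to $L^4$ control.} The key step is the passage from $L^4$ information on $\{X_k\}$ to $L^2$ information on $\{X_k\otimes X_k\}$. Using bilinearity of $\otimes$ and $\|u\otimes v\|_{\mathcal S_{ij}}=\|u\|_{H_i}\|v\|_{H_j}$, I would bound $\|x\otimes x-\tilde x\otimes\tilde x\|_{\mathcal S_{11}}\le\|x\|_{H_1}\|x-\tilde x\|_{H_1}+\|x-\tilde x\|_{H_1}\|\tilde x\|_{H_1}$, then set $x=X_m$, $\tilde x=X_m^{(m)}$, square, take expectations and apply Cauchy--Schwarz together with stationarity to get $\nu_{2,\mathcal S_{11}}(X_m\otimes X_m-X_m^{(m)}\otimes X_m^{(m)})\le 2\,\nu_{4,H_1}(X_0)\,\nu_{4,H_1}(X_m-X_m^{(m)})$. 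Since $(X_m\otimes X_m)^{(m)}=X_m^{(m)}\otimes X_m^{(m)}$, the left-hand side is precisely the $m$-th approximation term of $\{X_k\otimes X_k\}$, and summing, the right-hand side is finite by Assumption~{\bf (A)}; moreover $\nu_{2,\mathcal S_{11}}(Z_0)\le\nu_{4,H_1}(X_0)^2+\|C\|_{\mathcal S_{11}}<\infty$. The same computation, now with $\|x\otimes y-\tilde x\otimes\tilde y\|_{\mathcal S_{12}}\le\|x\|_{H_1}\|y-\tilde y\|_{H_2}+\|x-\tilde x\|_{H_1}\|\tilde y\|_{H_2}$ and using that $\{(X_k,Y_k)\}$ is $L^4$--$m$--approximable (Lemma~2.1 in \cite{hk10}, as recorded in Section~\ref{s:setup}), gives the corresponding bounds for $\{W_k\}$.

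\emph{The sample-mean bound and conclusion.} For centred stationary $\{Z_k\}$ one has $E\|\frac1n\sum_{k=1}^nZ_k\|^2=n^{-2}\sum_{|h|<n}(n-|h|)\gamma(h)$ with $\gamma(h):=E\langle Z_0,Z_h\rangle$. Splitting $\gamma(h)=E\langle Z_0,Z_h-Z_h^{(h)}\rangle+E\langle Z_0,Z_h^{(h)}\rangle$ and observing that $Z_h^{(h)}$ is a function of $\delta_1,\dots,\delta_h$ and of the independent copies $\delta_0',\delta_{-1}',\dots$, hence independent of $Z_0$ so that the second term vanishes by centering, Cauchy--Schwarz gives $|\gamma(h)|\le\nu_{2}(Z_0)\,\nu_{2}(Z_h-Z_h^{(h)})$. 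Summing over $h\in\mathbb Z$ and using the finiteness established above yields $n\,E\|\hat C_n-C\|^2_{\mathcal S_{11}}\le\Gamma_C$ and likewise $n\,E\|\hat\Delta_n-\Delta\|^2_{\mathcal S_{12}}\le\Gamma_\Delta$, with $\Gamma_C,\Gamma_\Delta$ depending only on the law of $\{(X_k,Y_k)\}$; I would then take $U:=1+\max\{\Gamma_C,\Gamma_\Delta\}$.

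\emph{Main obstacle.} There is no conceptually hard point here — this is the ``extension of results in \cite{hk10}'' announced in the text. The only things requiring attention are: (i) the tensor-product inequality plus Cauchy--Schwarz that converts fourth-order moment control of $\{X_k\}$, $\{Y_k\}$ into second-order control of the products $X_k\otimes X_k$, $X_k\otimes Y_k$; and (ii) the bookkeeping guaranteeing that every constant produced is a functional of the law of $\{(X_k,Y_k)\}$ and, crucially, is independent of $n$. A minor subtlety is the joint $L^4$--$m$--approximability of $(X_k,Y_k)$: it presupposes that $\{X_k\}$ and $\{\varepsilon_k\}$ are generated from a common i.i.d.\ innovation sequence, which is implicit in the Bernoulli-shift representation behind Assumption~{\bf (A)}, after which approximability of $Y_k=\Psi(X_k)+\varepsilon_k$ follows from boundedness of $\Psi$ and the triangle inequality.
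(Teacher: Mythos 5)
Your proposal is correct and follows essentially the same route as the paper: the same decomposition into centred stationary summands $X_k\otimes Y_k-\Delta$ (resp.\ $X_k\otimes X_k-C$), the same stationarity/autocovariance expansion with the coupling term $Z_r^{(\cdot)}$ independent of $Z_0$ killing the covariances up to a Cauchy--Schwarz remainder, and the same tensor-product inequality converting $L^4$ control of $X_k,Y_k$ into $L^2$ control of the products. The only cosmetic differences are your use of $Z_h^{(h)}$ where the paper uses $Z_r^{(r-1)}$ (both yield the required independence and summability) and your triangle-inequality form of the product bound versus the paper's squared version.
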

\begin{proof}[Proof of Lemma \ref{Lemma1}] We only prove the bound for $\Delta$, the one for $C$ is similar. First note that by Lemma~2.1 in \cite{hk10} and Assumption~{\bf (A)} $\{Y_k\}$ is also $L^4$--$m$--approximable. Next we observe that
\begeqO
n\E\big\|\Delta - \hat{\Delta}_n\big\|_{\cS_{12}}^2 = n\E\left\|\frac{1}{n}\sum\limits_{k=1}^nZ_k\right\|_{\cS_{12}}^2,
\eneqO
where $Z_k = X_k \otimes Y_k - \Delta$. Set $Z_k^{(r)} = X_k^{(r)} \otimes Y_k^{(r)} - \Delta$. Using the stationarity of the sequence $\{Z_k\}$ we obtain
\begin{align}
n\E \left\|\frac{1}{n}\sum\limits_{k=1}^n Z_k\right\|_{\cS_{12}}^2 &= \sum\limits_{|r|<n}\left(1- \frac{|r|}{n}\right)\E\ip{Z_0}{Z_r}_{\cS_{12}}\nonumber\\
&\leq \E\|Z_0\|_{\cS_{12}}^2 + 2\sum\limits_{r=1}^\infty |\E\ip{Z_0}{Z_r}_{\cS_{12}}|.\label{sum}
\end{align}
By the Cauchy-Schwarz inequality and the independence of $Z_r^{(r-1)}$ and $Z_0$ we derive:
\begin{align*}
|\E\ip{Z_0}{Z_r}_{\cS_{12}}|=|\E\ip{Z_0}{Z_r-Z_r^{(r-1)}}_{\cS_{12}}| \leq (\E\|Z_0\|_{\cS_{12}}^2)^{\frac{1}{2}} (\E\|Z_r - Z_r^{(r-1)}\|_{\cS_{12}}^2)^{\frac{1}{2}}.
\end{align*}
Using $\|X_0\otimes Y_0\|_{\cS_{12}}= \|X_0\|_{H_1} \|Y_0\|_{H_2}$ and again the Cauchy-Schwarz inequality we get
\begin{align*}
\E\|Z_0\|_{\cS_{12}}^{2} =  \E\|X_0\|_{H_1}^{2} \|Y_0\|_{H_2}^{2} \leq \nu_{4,H_1}^2(X_0)\nu_{4,H_2}^2(Y_0)<\infty.
\end{align*}

To finish the proof we show that $\sum\limits_{r=1}^\infty (\E\|Z_r - Z_r^{(r-1)}\|_{\cS_{12}}^2)^{\frac{1}{2}}<\infty$. By using an inequality of the type $|ab-cd|^{2} \leq 2|a|^{2}|b-d|^{2} + 2|d|^{2}|a-c|^{2}$ we obtain
\begin{align*}
\E\|Z_r - Z_r^{(r-1)}\|_{\cS_{12}}^2 &= \|X_r\otimes Y_r - X_r^{(r-1)} \otimes Y_r^{(r-1)}\|_{\cS_{12}}^2\\
%&= \E \sum\limits_{i=1}^\infty \|\ip{X_r}{e_i}_{H_1} Y_r - \ip{X_r^{(r-1)}}{e_i}_{H_1} Y_r^{(r-1)}\|_{H_2}^2\\
%&\leq \E \sum\limits_{i=1}^\infty 2|\ip{X_r}{e_i}_{H_1}|^2 \|Y_r - Y_r^{(r-1)}\|^2_{H_2} + 2\|Y_r^{(r-1)}\|^2_{H_2} |\ip{X_r - X_r^{(r-1)}}{e_i}_{H_1}|^2\\
&\leq 2\E \|X_r\|^2_{H_1} \|Y_r - Y_r^{(r-1)}\|^2_{H_2} + 2\E\|Y_r^{(r-1)}\|^2_{H_2} \|X_r - X_r^{(r-1)}\|^2_{H_1}\\
&\leq 2 \nu_{4,H_1}^2(X_r) \nu_{4,H_2}^2(Y_r - Y_r^{(r-1)}) + 2 \nu_{4,H_2}^2(Y_r^{(r-1)}) \nu_{4,H_1}^2(X_r - X_r^{(r-1)}).
\end{align*}
%where we used Parseval's identity $\|X\|_H^2=\sum_{k\geq 1}\langle X,e_k\rangle_{H}^2$.
Convergence of \eqref{sum} follows now directly from $L^4$-$m$--approximability.
\end{proof}

Application of this lemma leads also to bounds for estimators of eigenvalues and eigenfunctions of $C$ via the following two lemmas (see \cite{hk10}).

\begin{lem}\label{Lemma2}
Suppose $\lambda_i, \hat{\lambda}_i$ are the eigenvalues of $C$ and $\hat C$, respectively, listed in decreasing order. Let $v_i, \hat{v}_i$ be the corresponding eigenvectors and let $\hat c_i = \ip{v_i}{\hat{v}_i}$. Then for each $j\geq 1$, 
\begin{align*}
\hat{\alpha}_j\|v_j - \hat{c}_j\hat{v}_j\|_{H_1} \leq 2\sqrt{2} \|\hat{C} - C\|_{\cL_{11}},
\end{align*}
where $\hat{\alpha}_j=\min\{\hat{\lambda}_{j-1}-\hat{\lambda}_{j},\hat{\lambda}_{j}-\hat{\lambda}_{j+1}\}$ and $\hat\alpha_1=\hat\lambda_2-\hat\lambda_1$.
\end{lem}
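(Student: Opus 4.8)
The plan is to prove this directly, since it is nothing but the ``empirical'' counterpart of the classical eigenvector perturbation bound of Bosq~\cite{b00} (see also \cite{hk10}), obtained by running the usual argument with the roles of $C$ and $\hat C$ interchanged. First I would complete $\{\hat v_i\}_{i\ge1}$ to an orthonormal basis of $H_1$ consisting of eigenvectors of $\hat C$ (those with index beyond $\rank\hat C$ carrying the eigenvalue $0$), which is possible because $H_1$ is separable. Since $\hat c_j\hat v_j=\ip{v_j}{\hat v_j}\hat v_j$ is the orthogonal projection of $v_j$ onto $\mathrm{span}\{\hat v_j\}$, Parseval's identity yields
\[
\|v_j-\hat c_j\hat v_j\|_{H_1}^2=\sum_{i\neq j}\ip{v_j}{\hat v_i}^2=1-\hat c_j^{\,2}\le 1 .
\]

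Next I would record the elementary identity, valid for every $i\ge1$,
\[
\ip{(\hat C-C)v_j}{\hat v_i}=\ip{v_j}{\hat C\hat v_i}-\ip{Cv_j}{\hat v_i}=(\hat\lambda_i-\lambda_j)\ip{v_j}{\hat v_i},
\]
which uses self-adjointness of $\hat C$ together with $Cv_j=\lambda_j v_j$. For indices $i$ with $\hat\lambda_i\neq\lambda_j$ this lets one solve for $\ip{v_j}{\hat v_i}^2$; summing over $i\neq j$ and invoking Bessel's inequality together with $\|(\hat C-C)v_j\|_{H_1}\le\|\hat C-C\|_{\cL_{11}}$ gives
\[
\Big(\min_{i\neq j}|\hat\lambda_i-\lambda_j|\Big)^2\|v_j-\hat c_j\hat v_j\|_{H_1}^2\le\|\hat C-C\|_{\cL_{11}}^2 .
\]
Finally I would bound the gap in the denominator from below. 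From the monotone ordering of the $\hat\lambda_i$ and the definition of $\hat\alpha_j$ one has $|\hat\lambda_i-\hat\lambda_j|\ge\hat\alpha_j$ for all $i\neq j$, while the classical eigenvalue stability bound (see, e.g., \cite{b00}) gives $|\hat\lambda_j-\lambda_j|\le\|\hat C-C\|_{\cL_{11}}$; hence $\min_{i\neq j}|\hat\lambda_i-\lambda_j|\ge\hat\alpha_j-\|\hat C-C\|_{\cL_{11}}$. A short case distinction then finishes the argument: if $\|\hat C-C\|_{\cL_{11}}\ge\hat\alpha_j/2$ the claim is immediate from $\|v_j-\hat c_j\hat v_j\|_{H_1}\le1$, whereas if $\|\hat C-C\|_{\cL_{11}}<\hat\alpha_j/2$ then $\min_{i\neq j}|\hat\lambda_i-\lambda_j|>\hat\alpha_j/2>0$ and the previous display becomes $\hat\alpha_j\|v_j-\hat c_j\hat v_j\|_{H_1}\le 2\|\hat C-C\|_{\cL_{11}}\le 2\sqrt2\,\|\hat C-C\|_{\cL_{11}}$.

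I do not expect a genuine obstacle here; the only points needing a little care are the bookkeeping in the case distinction, so that one never divides by $\hat\lambda_i-\lambda_j$ when it might vanish, and the degenerate case $\hat\alpha_j=0$ (repeated empirical eigenvalues), in which the asserted inequality holds trivially because its left-hand side is then $0$. For $j=1$ the same reasoning applies, reading $\hat\alpha_1=\hat\lambda_1-\hat\lambda_2$.
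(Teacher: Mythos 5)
Your proof is correct. The paper itself gives no argument for this lemma --- it simply cites H\"ormann and Kokoszka~\cite{hk10} --- so there is nothing to compare line by line; what you supply is essentially the standard Bosq-type perturbation argument, and you have correctly identified the one substantive wrinkle: the lemma as stated here uses the \emph{empirical} gaps $\hat\alpha_j$, whereas the cited results are usually stated with the population gaps $\alpha_j$, so the roles of $C$ and $\hat C$ really do have to be interchanged. Your execution of that swap is sound: expanding $v_j$ in the orthonormal eigenbasis of $\hat C$ gives $\|v_j-\hat c_j\hat v_j\|_{H_1}^2=\sum_{i\neq j}\ip{v_j}{\hat v_i}^2$ exactly (with no extra $(1-\hat c_j^2)^2$ term, which is why you end up with the constant $2$ rather than $2\sqrt2$ --- a harmless sharpening); the identity $\ip{(\hat C-C)v_j}{\hat v_i}=(\hat\lambda_i-\lambda_j)\ip{v_j}{\hat v_i}$ is valid for every $i$, including indices beyond $\rank\hat C$ where $\hat\lambda_i=0$, and the lower bound $|\hat\lambda_i-\lambda_j|\geq\hat\alpha_j-\|\hat C-C\|_{\cL_{11}}$ holds uniformly in $i\neq j$ because the ordering of the $\hat\lambda_i$ gives $|\hat\lambda_i-\hat\lambda_j|\geq\hat\alpha_j$ for all $i\neq j$. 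The two-case argument (trivial bound via $\|v_j-\hat c_j\hat v_j\|_{H_1}\leq1$ when $\|\hat C-C\|_{\cL_{11}}\geq\hat\alpha_j/2$, and the quantitative bound otherwise) correctly avoids any division by a possibly vanishing quantity, and your reading of $\hat\alpha_1$ as $\hat\lambda_1-\hat\lambda_2$ is the evident correction of a sign typo in the statement. No gaps.
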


\begin{lem}\label{Lemma3}
Let $\lambda_j, \hat{\lambda}_j$ be defined as in Lemma \ref{Lemma2}. Then for each $j\geq 1$, 
\begin{align*}
|\lambda_j - \hat{\lambda}_j| \leq \|C - \hat{C}\|_{\cL_{11}}.
\end{align*}
\end{lem}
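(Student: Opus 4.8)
The statement is the classical Weyl perturbation bound for eigenvalues of self-adjoint operators, so the plan is to derive it from the Courant--Fischer min--max principle. Both $C$ and $\hat C$ are self-adjoint, positive semi-definite and compact (indeed Hilbert--Schmidt), hence for every $j\geq 1$ one has the representation
\[
\lambda_j=\max_{\dim V=j}\ \min_{\substack{x\in V\\ \|x\|_{H_1}=1}}\ \ip{Cx}{x}_{H_1},
\]
the inner minimum being attained because the unit sphere of a $j$-dimensional subspace $V\subseteq H_1$ is compact, and the outer maximum being attained at $V=\mathrm{span}\{v_1,\dots,v_j\}$; the analogous formula holds for $\hat\lambda_j$ with $\hat C$ in place of $C$.

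First I would fix $j$ and an arbitrary $j$-dimensional subspace $V\subseteq H_1$. For every unit vector $x\in V$ we have
\[
\ip{\hat C x}{x}_{H_1}=\ip{Cx}{x}_{H_1}+\ip{(\hat C-C)x}{x}_{H_1}\ \geq\ \ip{Cx}{x}_{H_1}-\|\hat C-C\|_{\cL_{11}},
\]
where the estimate of the last term follows from the Cauchy--Schwarz inequality together with $\|x\|_{H_1}=1$ and the definition of the operator norm. Taking the infimum over unit vectors $x\in V$ and then the supremum over all $j$-dimensional $V$ gives $\hat\lambda_j\geq\lambda_j-\|\hat C-C\|_{\cL_{11}}$. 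Interchanging the roles of $C$ and $\hat C$ yields $\lambda_j\geq\hat\lambda_j-\|\hat C-C\|_{\cL_{11}}$, and combining the two inequalities delivers $|\lambda_j-\hat\lambda_j|\leq\|\hat C-C\|_{\cL_{11}}$.

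I do not anticipate any real obstacle here; the only point requiring a word of care is the validity of the min--max representation for compact (as opposed to finite-rank) operators, whose spectrum may accumulate at $0$. This is classical, and if one prefers to avoid it the claim can be reduced to the finite-dimensional case, since $\hat C$ has rank at most $n$ and $C$ is the operator-norm limit of its finite-rank truncations $\sum_{i\leq N}\lambda_i\,v_i\otimes v_i$ as $N\to\infty$. Since this lemma is quoted from \cite{hk10}, in the paper it suffices to refer to that source, but the argument just sketched is self-contained.
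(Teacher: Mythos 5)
Your argument is correct: both $C$ and $\hat C$ are compact, self-adjoint and positive semi-definite, so the Courant--Fischer representation applies, and the perturbation step $\bigl|\ip{(\hat C-C)x}{x}_{H_1}\bigr|\leq\|\hat C-C\|_{\cL_{11}}$ for unit vectors $x$ gives the two one-sided inequalities exactly as you describe. The paper itself offers no proof, deferring to \cite{hk10} (which in turn rests on the same classical min--max argument, cf.\ Lemma~4.2 of \cite{b00}), so your write-up is simply a self-contained version of the standard proof being invoked.
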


In the following calculations we work with finite sums of the representation in \eqref{psiniceform}:
\begeq\label{eq:psik}
\Psi_K(x) = \sum_{j=1}^K \frac{\Delta(v_j)}{\lambda_j} \ip{v_j}{x}.
\eneq
%We first show the simple fact that $K\convP \infty$ implies $\Psi_K\convP\Psi$.
In order to prove the main result we consider the term $\|\Psi-\hat{\Psi}_K\|_{{\cL_{12}}}$ and decompose it using the triangle inequality into four terms
\begin{align*}
\|\Psi - \hat{\Psi}_K\|_{\cL_{12}} \leq \sum_{i=1}^4\|S_i(K)\|_{\cL_{12}},
\end{align*}
where
\begin{align}
S_1(K) &= \sum_{j=1}^K\left( \hat{c}_j\hat{v}_j \otimes \frac{\hat{\Delta}(\hat{c}_j\hat{v}_j)}{\hat{\lambda}_j} - \hat{c}_j\hat{v}_j \otimes \frac{{\Delta}(\hat{c}_j\hat{v}_j)}{\hat{\lambda}_j}\right), \label{dc:t1}\\
S_2(K) &= \sum_{j=1}^K\left( \hat{c}_j\hat{v}_j \otimes \frac{{\Delta}(\hat{c}_j\hat{v}_j)}{\hat{\lambda}_j} - \hat{c}_j\hat{v}_j \otimes \frac{{\Delta}(\hat{c}_j\hat{v}_j)}{\lambda_j}\right),\label{dc:t2}\\
S_3(K) &= \sum_{j=1}^K \left(\hat{c}_j\hat{v}_j \otimes \frac{{\Delta}(\hat{c}_j\hat{v}_j)}{{\lambda_j}} - v_j \otimes \frac{{\Delta}(v_j)}{\lambda_j}\right),\label{dc:t3}\\
S_4(K) &= \Psi - \Psi_K.\label{dc:t4}
\end{align}
The following simple lemma gives convergence of $S_4(K_n)$, provided $K_n\convP\infty$.

\begin{lem}\label{lem:r1}
Let $\{{K_n},n\geq 1\}$ be a random sequence taking values in $\mathbb{N}$, such that ${K_n}\convP\infty$ as $n\to\infty$. Then $\Psi_{{K_n}}$ defined by the equation \eqref{eq:psik} converges to $\Psi$ in probability.
\end{lem}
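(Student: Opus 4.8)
The plan is to recognize $\Psi_K$ as $\Psi$ precomposed with a finite-rank orthogonal projection, reduce the claim to the decay of the tail of a convergent series, and then dispatch the random truncation index by a soft probabilistic argument. From the computation preceding \eqref{psiniceform} we have $\Delta(v_j)=\lambda_j\Psi(v_j)$, hence $\Delta(v_j)/\lambda_j=\Psi(v_j)$. Writing $P_K$ for the orthogonal projection of $H_1$ onto $\spa\{v_1,\dots,v_K\}$, the definition \eqref{eq:psik} then reads $\Psi_K=\Psi\circ P_K$, so that $\Psi-\Psi_K=\Psi\,(I-P_K)$.

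Next I would bound the operator norm by the Hilbert--Schmidt norm --- legitimate since $\Psi$, and therefore $\Psi\,(I-P_K)$, is Hilbert--Schmidt and $\|\cdot\|_{\cL_{12}}\le\|\cdot\|_{\cS_{12}}$ --- and evaluate the latter in the orthonormal basis $\{v_j\}_{j\ge1}$ of $H_1$. (That $\{v_j\}$ is an ONB of the \emph{whole} space $H_1$ is exactly why Section~\ref{ss:estimator} reduces to $H_1=\overline{\Ima(C)}$ with $\dim H_1=\infty$.) This gives
\[
\|\Psi-\Psi_K\|_{\cL_{12}}^2\;\le\;\|\Psi\,(I-P_K)\|_{\cS_{12}}^2\;=\;\sum_{j>K}\|\Psi(v_j)\|_{H_2}^2\;=:\;a_K^2 .
\]
Because $\sum_{j\ge1}\|\Psi(v_j)\|_{H_2}^2=\|\Psi\|_{\cS_{12}}^2<\infty$, the sequence $(a_K)$ is \emph{deterministic}, non-increasing, and $a_K\to0$.

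It remains to plug in the random level $K_n$. Fix $\varepsilon>0$ and choose $K_0$ with $a_{K_0}<\varepsilon$; by monotonicity $a_K<\varepsilon$ for every $K\ge K_0$, so on the event $\{K_n\ge K_0\}$ we have $\|\Psi-\Psi_{K_n}\|_{\cL_{12}}\le a_{K_n}<\varepsilon$. Hence
\[
\Prob\big(\|\Psi-\Psi_{K_n}\|_{\cL_{12}}\ge\varepsilon\big)\;\le\;\Prob(K_n<K_0)\;\longrightarrow\;0
\]
as $n\to\infty$ by the hypothesis $K_n\convP\infty$; since $\varepsilon>0$ was arbitrary, $\Psi_{K_n}\convP\Psi$.

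I do not anticipate a genuine obstacle: the content is entirely the identity $\Psi_K=\Psi P_K$ plus summability of $\|\Psi\|_{\cS_{12}}^2$. The two points to keep straight are that $\{v_j\}$ must be an orthonormal basis of all of $H_1$ (so the Hilbert--Schmidt sum captures every direction of $\Psi$), and that one must extract the deterministic bounding sequence $(a_K)$ \emph{before} substituting the data-dependent, random $K_n$ --- the pointwise-in-$\omega$ bound $\|\Psi-\Psi_{K_n}\|_{\cL_{12}}\le a_{K_n}$ is precisely what makes the final convergence-in-probability step routine.
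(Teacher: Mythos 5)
Your proof is correct and follows essentially the same route as the paper: identify $\Psi_K=\Psi\circ P_K$ via $\Delta(v_j)/\lambda_j=\Psi(v_j)$, bound $\|\Psi-\Psi_K\|_{\cL_{12}}$ by the Hilbert--Schmidt tail $\sum_{j>K}\|\Psi(v_j)\|_{H_2}^2$, and then use $K_n\convP\infty$ to kill the event $\{K_n<K_0\}$. Your write-up is in fact a little more explicit than the paper's (which compresses the last step into a decomposition over $\{K_n=m\}$), but there is no substantive difference.
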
}
\begin{proof}
Notice that since $\|\Psi\|^2_{\cS_{12}} = \sum\limits_{j=1}^\infty \|\Psi(v_j)\|_{H_2}^2 < \infty$ for some orthonormal base $\{v_j\}$, we can find $m_\varepsilon \in \mathbb{N}$ such that $\|\Psi-\Psi_m\|_{\cS_{12}}^2=\sum\limits_{j>m} \|\Psi(v_j)\|_{H_2}^2 \leq \varepsilon$, whenever $m>m_\varepsilon$. Hence
\begin{align*}
\Prob(\|\Psi - \Psi_{{K_n}}\|^{2}_{\cS_{12}} > \varepsilon) &= \sum_{m=1}^\infty \Prob(\|\Psi - \Psi_{m}\|_{\cS_{12}}^{2} > \varepsilon \cap {K_n} = m)\\
&= \Prob({K_n}\leq m_\varepsilon).
\end{align*}
\end{proof}

The next three lemmas deal with terms \eqref{dc:t1}--\eqref{dc:t3}. 
\begin{lem} \label{lem:d1}
Let $S_1(K)$ be defined by the equation \eqref{dc:t1} and $U$ the constant derived in Lemma \ref{Lemma1}. Then
\begin{align*}
\Prob ( \|S_1({K_n})\|_{\cL_{12}} > \varepsilon ) \leq \frac{U m_n^2}{\varepsilon^2 n}.
\end{align*}
\end{lem}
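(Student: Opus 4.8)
The plan is to bound $\|S_1(K_n)\|_{\cL_{12}}$ pathwise by $m_n\,\|\hat\Delta-\Delta\|_{\cS_{12}}$ and then finish with Chebyshev's inequality and Lemma~\ref{Lemma1}; the whole point is to control the operator $S_1$ through the \emph{Hilbert--Schmidt} norm of $\hat\Delta-\Delta$, never through its operator norm.

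First I would rewrite $S_1(K)$ as a sum of rank-one operators. By linearity of $\otimes$ in its second argument and of $\hat\Delta-\Delta$ we have $S_1(K)=\sum_{j=1}^K (\hat c_j\hat v_j)\otimes w_j$ with $w_j:=\hat\lambda_j^{-1}(\hat\Delta-\Delta)(\hat c_j\hat v_j)\in H_2$. The vectors $\hat c_j\hat v_j$ are pairwise orthogonal (the $\hat v_j$ being orthonormal) with $\|\hat c_j\hat v_j\|_{H_1}=|\hat c_j|\le 1$, since $\hat c_j=\ip{v_j}{\hat v_j}$ and Cauchy--Schwarz. Extending $\{\hat v_j\}$ to an ONB of $H_1$ and evaluating the Hilbert--Schmidt norm on that basis gives $\|S_1(K)\|_{\cS_{12}}^2=\sum_{j=1}^K|\hat c_j|^2\|w_j\|_{H_2}^2\le\sum_{j=1}^K\|w_j\|_{H_2}^2$, and since $\|\cdot\|_{\cL_{12}}\le\|\cdot\|_{\cS_{12}}$ we obtain $\|S_1(K)\|_{\cL_{12}}^2\le\sum_{j=1}^K\|w_j\|_{H_2}^2$.

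Next I would estimate the right-hand side at $K=K_n$. Writing $\|w_j\|_{H_2}^2=\hat\lambda_j^{-2}|\hat c_j|^2\|(\hat\Delta-\Delta)(\hat v_j)\|_{H_2}^2$ and factoring out $\max_{j\le K_n}\hat\lambda_j^{-2}$, I use that (i) $K_n\le B_n$, so by monotonicity of the $\hat\lambda_j$ one has $\hat\lambda_j^{-1}\le m_n$ for every $j\le K_n$ (in particular $\hat\lambda_j>0$ there, so $w_j$ is well defined), and (ii) $\sum_{j\ge 1}\|(\hat\Delta-\Delta)(\hat v_j)\|_{H_2}^2=\|\hat\Delta-\Delta\|_{\cS_{12}}^2$ because $\{\hat v_j\}$ is an ONB. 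Combined with $|\hat c_j|\le 1$ this yields the pathwise bound $\|S_1(K_n)\|_{\cL_{12}}^2\le m_n^2\,\|\hat\Delta-\Delta\|_{\cS_{12}}^2$. Then Chebyshev's inequality gives $\Prob(\|S_1(K_n)\|_{\cL_{12}}>\varepsilon)\le\varepsilon^{-2}\E\|S_1(K_n)\|_{\cL_{12}}^2\le\varepsilon^{-2}m_n^2\,\E\|\hat\Delta-\Delta\|_{\cS_{12}}^2\le U m_n^2/(\varepsilon^2 n)$ by Lemma~\ref{Lemma1}.

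The one genuinely delicate step is (ii) above: one must resist bounding $\|(\hat\Delta-\Delta)(\hat v_j)\|_{H_2}$ termwise by $\|\hat\Delta-\Delta\|_{\cL_{12}}$, which would cost a spurious extra factor $K_n$ and give $m_n^3$ instead of $m_n^2$; instead one sums the squared norms over the whole orthonormal system to recover exactly $\|\hat\Delta-\Delta\|_{\cS_{12}}^2$, which is precisely why Lemma~\ref{Lemma1} is stated in the Hilbert--Schmidt norm. The randomness of $K_n$ causes no difficulty, since the bound on $\|S_1(K_n)\|_{\cL_{12}}^2$ holds surely before taking expectations.
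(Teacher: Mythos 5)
Your proposal is correct and follows essentially the same route as the paper: bound the operator norm by the Hilbert--Schmidt norm, exploit the orthogonality of the $\hat v_j$ to reduce $\|S_1(K_n)\|_{\cS_{12}}^2$ to $\sum_{j\le K_n}\|w_j\|_{H_2}^2$, use $\hat\lambda_j^{-1}\le m_n$ for $j\le K_n\le B_n$ together with the full Hilbert--Schmidt sum $\|\hat\Delta-\Delta\|_{\cS_{12}}^2$, and finish with Chebyshev and Lemma~\ref{Lemma1}. Your handling of the factors $|\hat c_j|\le 1$ is in fact slightly more careful than the paper's, which applies its identity \eqref{eq:hs} as if the vectors $\hat c_j\hat v_j$ were orthonormal rather than merely orthogonal with norm at most one.
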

\begin{proof}
Note that for an orthonormal system $\{e_i \in H_1\ |\ i \geq 1\}$ and any sequence $\{x_i \in H_2\ |\ i \geq 1\}$  the following identity holds:
\begin{align}
\Bigg\|\sum_{i=1}^{K} e_i \otimes x_i\Bigg\|_{\cS_{12}}^2 = \sum_{j=1}^{\infty}\Bigg\|\sum_{i=1}^{K} \ip{e_i}{e_j} x_i\Bigg\|_{H_2}^2 = \sum_{j=1}^K\|x_j\|_{H_2}^2\label{eq:hs}.
\end{align}
Using this and the fact that the Hilbert-Schmidt norm bounds the operator norm we derive
\begin{align*}
\Prob ( \|S_1({K_n})\|^2_{\cL_{12}} > \varepsilon )  &\leq \Prob\Bigg(\Bigg\| \sum\limits_{j=1}^{{K_n}} \hat{c}_j\hat{v}_j \otimes \frac{1}{\hat{\lambda}_j}(\hat{\Delta} - \Delta)(\hat{c}_j\hat{v}_j) \Bigg\|^2_{{\cS_{12}}} > \varepsilon\Bigg) \\
&\leq \Prob\Bigg(\frac{1}{\hat{\lambda}_{K_n}^{2}}\sum\limits_{j=1}^{{K_n}} \| (\hat{\Delta} - \Delta)(\hat{c}_j\hat{v}_j) \|^2_{{H_2}} > \varepsilon\Bigg) \\
&\leq \Prob( m_n^{2} \|\hat{\Delta} - \Delta \|^2_{{\cS_{12}}} > \varepsilon).
\end{align*}
By the Markov inequality
\begin{align*}
\Prob ( \|S_1({K_n})\|^2_{\cL_{12}} > \varepsilon ) \leq \E\|\hat{\Delta}-\Delta\|_{\cS_{12}}^2 \frac{m_n^2}{\varepsilon} \leq  U \frac{m_n^2}{\varepsilon n},
\end{align*}
where the last inequality is obtained from Lemma $\ref{Lemma1}$.
\end{proof}
\begin{lem} \label{lem:d2}
Let $S_2(K)$ be defined by the equation \eqref{dc:t2} and $U$ the constant from Lemma \ref{lem:d1}. Then 
\begin{align*}
\Prob ( \|S_2({K_n})\|_{\cL_{12}} > \varepsilon ) \leq 4U\|\Delta\|^2_{\cS_{12}}\frac{m_n^4}{\varepsilon^2 n}.
\end{align*}
\end{lem}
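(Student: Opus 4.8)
My plan is to follow the pattern of Lemma~\ref{lem:d1}, the extra difficulty being that the scalar in front of each rank-one summand of $S_2(K)$ now involves $\hat\lambda_j^{-1}-\lambda_j^{-1}$, which mixes an empirical and a population eigenvalue. First I would pull the scalars out of the tensor products in \eqref{dc:t2} and use linearity of $\Delta$ to write
\[
S_2(K)=\sum_{j=1}^{K}\hat v_j\otimes\bigl[\hat c_j^2\,(\hat\lambda_j^{-1}-\lambda_j^{-1})\,\Delta(\hat v_j)\bigr].
\]
Since the Hilbert--Schmidt norm dominates the operator norm and $\{\hat v_j\}$ is orthonormal, the identity \eqref{eq:hs} gives
\[
\|S_2(K)\|_{\cL_{12}}^2\le\|S_2(K)\|_{\cS_{12}}^2=\sum_{j=1}^{K}\hat c_j^4\,(\hat\lambda_j^{-1}-\lambda_j^{-1})^2\,\|\Delta(\hat v_j)\|_{H_2}^2\le\Bigl(\max_{1\le j\le K}(\hat\lambda_j^{-1}-\lambda_j^{-1})^2\Bigr)\,\|\Delta\|_{\cS_{12}}^2,
\]
using $|\hat c_j|\le1$ and $\sum_{j=1}^{K}\|\Delta(\hat v_j)\|_{H_2}^2\le\|\Delta\|_{\cS_{12}}^2<\infty$ ($\Delta$ is Hilbert--Schmidt by Assumption~{\bf (A)}). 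Everything thus reduces to controlling $\max_{1\le j\le K_n}(\hat\lambda_j^{-1}-\lambda_j^{-1})^2$.

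This is the step I expect to be the main obstacle: the construction of $K_n$ forces $\hat\lambda_j^{-1}\le m_n$ for $j\le K_n\le B_n$, but it gives no direct handle on the population quantity $\lambda_j^{-1}$, and on an atypical sample $\hat\lambda_j$ and $\lambda_j$ need not be comparable at all. I would resolve this by passing to the good event $A_n=\{\|C-\hat C\|_{\cL_{11}}\le\frac1{2m_n}\}$. On $A_n$, for every $j\le K_n\le B_n$ one has $\hat\lambda_j\ge\hat\lambda_{B_n}\ge 1/m_n$ (by the definition of $B_n$ in {\bf (K)} and monotonicity of the eigenvalues), while Lemma~\ref{Lemma3} yields $|\lambda_j-\hat\lambda_j|\le\|C-\hat C\|_{\cL_{11}}\le\frac1{2m_n}\le\frac12\hat\lambda_j$, so $\lambda_j\ge\frac12\hat\lambda_j\ge\frac1{2m_n}$. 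Substituting these into $(\hat\lambda_j^{-1}-\lambda_j^{-1})^2=(\lambda_j-\hat\lambda_j)^2/(\hat\lambda_j^2\lambda_j^2)$ gives, on $A_n$, the uniform bound $4m_n^4\|C-\hat C\|_{\cL_{11}}^2\le 4m_n^4\|C-\hat C\|_{\cS_{11}}^2$, hence $\|S_2(K_n)\|_{\cL_{12}}^2\le 4m_n^4\|\Delta\|_{\cS_{12}}^2\|C-\hat C\|_{\cS_{11}}^2$ there.

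Finally I would split over $A_n$. On $A_n$, Markov applied to $\|C-\hat C\|_{\cS_{11}}^2$ together with Lemma~\ref{Lemma1} gives $\Prob\bigl(\{\|S_2(K_n)\|_{\cL_{12}}>\varepsilon\}\cap A_n\bigr)\le 4m_n^4\|\Delta\|_{\cS_{12}}^2\varepsilon^{-2}\,\E\|C-\hat C\|_{\cS_{11}}^2\le 4U\|\Delta\|_{\cS_{12}}^2 m_n^4/(\varepsilon^2 n)$, which is exactly the claimed bound. Off $A_n$, the same two tools give $\Prob(A_n^c)\le\Prob\bigl(\|C-\hat C\|_{\cS_{11}}^2>\tfrac1{4m_n^2}\bigr)\le 4m_n^2\,\E\|C-\hat C\|_{\cS_{11}}^2\le 4Um_n^2/n$, which is of strictly lower order in $m_n$ than the leading term and is absorbed into it (for $n$ large enough that $\varepsilon^2\le\|\Delta\|_{\cS_{12}}^2 m_n^2$). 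Apart from the passage through $A_n$, the argument is the same tensor-product/Markov bookkeeping already carried out for $S_1(K_n)$ in Lemma~\ref{lem:d1}.
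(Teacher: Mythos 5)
Your argument is correct and follows essentially the same route as the paper's proof: both reduce, via the Hilbert--Schmidt identity \eqref{eq:hs}, to controlling $\max_{j\le K_n}\bigl(\hat\lambda_j^{-1}-\lambda_j^{-1}\bigr)^2=\max_{j\le K_n}(\hat\lambda_j-\lambda_j)^2/(\hat\lambda_j^2\lambda_j^2)$, use $\hat\lambda_j^{-1}\le m_n$ from the definition of $B_n$, and recover $\lambda_j^{-1}\le 2m_n$ on the event $\{\|C-\hat C\|_{\cL_{11}}\le \tfrac{1}{2m_n}\}$ via Lemma~\ref{Lemma3} before applying Markov and Lemma~\ref{Lemma1}. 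The only (immaterial) difference is that the paper argues the complementary event contributes probability zero for $n$ large, so its constant is exactly $4U\|\Delta\|^2_{\cS_{12}}$, whereas your Markov bound on $\Prob(A_n^c)$ costs at most a factor $2$ in the constant, which does not affect the use of the lemma in Theorem~\ref{thm:main}.
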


\begin{proof}
Assumption ${K_n} \leq B_n$ and identity (\ref{eq:hs}) imply
\begin{align*}
&\Prob ( \|S_2({K_n})\|^2_{\cL_{12}} > \varepsilon )
= \Prob\Bigg( \Bigg\| \sum_{j=1}^{{K_n}} \Bigg( \frac{1}{\lambda_j} - \frac{1}{\hat{\lambda}_j} \Bigg) \hat{c}_j\hat{v}_j \otimes \Delta(\hat{c}_j\hat{v}_j) \Bigg\|^2_{\cL_{12}} > \varepsilon \Bigg)\\
&\quad\leq \Prob\Bigg( \max_{1\leq j\leq K_n} \Bigg(\frac{\hat{\lambda}_j - \lambda_j}{\hat{\lambda}_j\lambda_j}\Bigg)^2 \sum_{j=1}^{K_n}\|\Delta(\hat{c}_j\hat{v}_j)\|^2_{H_2} > \varepsilon \Bigg)\\
&\quad\leq \Prob\Bigg( \max_{1\leq j\leq K_n} \Bigg(\frac{\hat{\lambda}_j - \lambda_j}{\lambda_j}\Bigg)^2 > \frac{\varepsilon}{m_n^2 \|\Delta\|^2_{\cS_{12}}} \Bigg).
\end{align*}
For simplifying the notation let  $b^2 = \frac{\varepsilon}{m_n^2 \|\Delta\|^2_{\cS_{12}}}$, then
\begin{align*}
&\Prob ( \|S_2({K_n})\|^2_{\cL_{12}} > \varepsilon )
\leq \Prob\Bigg( \max_{1\leq j\leq K_n} \Bigg|\frac{\hat{\lambda}_j - \lambda_j}{\lambda_j}\Bigg| > b \Bigg)\\
&\quad\leq \Prob\Bigg( \frac{1}{{\lambda}_{K_n}} \max_{1\leq j\leq K_n} |\hat{\lambda}_j - \lambda_j| > b\ \cap\ \max_{1\leq j\leq K_n} |\hat{\lambda}_j - \lambda_j| \leq \frac{b}{2m_n} \Bigg) + \Prob\Big(\max_{1\leq j\leq K_n}|\hat{\lambda}_j - \lambda_j| > \frac{b}{2m_n}\Big).
\end{align*}
The first summand vanishes because 
\begin{align*}
&\Prob\Bigg( \frac{1}{{\lambda}_{K_n}} \max_{1\leq j\leq K_n} |\hat{\lambda}_j - \lambda_j| > b\ \cap\ \max_{1\leq j\leq K_n} |\hat{\lambda}_j - \lambda_j| \leq \frac{b}{2m_n} \Bigg) \\
&\quad\leq \Prob\Bigg( \frac{b}{{2\lambda}_{K_n}  m_n} > b\ \cap\ |\hat{\lambda}_{K_n} - \lambda_{K_n}| \leq \frac{b}{2m_n} \Bigg)\\
&\quad\leq \Prob\Bigg( \frac{1}{2m_n} > {\lambda}_{K_n}\ \cap\ |\hat{\lambda}_{K_n} - \lambda_{K_n}| \leq \frac{\sqrt{\varepsilon}}{m_n^2 2\|\Delta\|_{\cS_{12}^2}} \Bigg),
\end{align*}
which is equal to $0$ for $n$ large enough, since $\hat{\lambda}_{K_n}\geq \frac{1}{m_n}$ and the distance between $\lambda_{K_n}$ and $\hat{\lambda}_{K_n}$ shrinks faster than $\frac{1}{2m_n}$. For the second term we use Lemma \ref{Lemma3} and the Markov inequality:
\begin{align*}
\Prob ( \|S_2({K_n})\|^2_{\cL_{12}} > \varepsilon )
&\leq \Prob\Big(\max_{1\leq j\leq K_n}|\hat{\lambda}_j - \lambda_j| > \frac{b}{2m_n} \Big)\\
&\leq \Prob\Big(\|\hat{C} - C\|_{\cL_{11}} > \frac{b}{2m_n} \Big)\\
&\leq \frac{4m_n^2 }{b^2} \E\|\hat{C} - C\|_{\cL_{11}}^2\\
&\leq 4U\|\Delta\|^2_{\cS_{12}}\frac{m_n^4}{\varepsilon n}.
\end{align*}
\end{proof}
\begin{lem} \label{lem:d3}
Let $S_3(K)$ be defined by \eqref{dc:t3} and $U$ be the constant defined in Lemma~\ref{lem:d1}, then
\begin{align*}
\Prob( \|S_3({K_n})\|_{\cL_{12}} < \varepsilon) \leq U(128\|\Delta\|_{\cL_{12}}^2 + 4\varepsilon^2) \frac{m_n^6}{\varepsilon^2 n}.
\end{align*}
\end{lem}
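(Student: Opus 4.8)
The plan is to bound the tail probability $\Prob\bigl(\|S_3(K_n)\|_{\cL_{12}}>\varepsilon\bigr)$ (the displayed inequality is meant with ``$>\varepsilon$''). The first step is algebraic: writing $w_j=\hat{c}_j\hat{v}_j-v_j$ and using bilinearity of the tensor product, each summand of \eqref{dc:t3} splits as
\[
\hat{c}_j\hat{v}_j\otimes\Delta(\hat{c}_j\hat{v}_j)-v_j\otimes\Delta(v_j)=\hat{c}_j\hat{v}_j\otimes\Delta(w_j)+w_j\otimes\Delta(v_j),
\]
so $S_3(K_n)=S_3^{(a)}+S_3^{(b)}$ with $S_3^{(a)}=\sum_{j=1}^{K_n}\lambda_j^{-1}\hat{c}_j\hat{v}_j\otimes\Delta(w_j)$ and $S_3^{(b)}=\sum_{j=1}^{K_n}\lambda_j^{-1}w_j\otimes\Delta(v_j)$. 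The reason for this particular split is that in $S_3^{(a)}$ the left factors $\hat{v}_j$ are orthonormal, so identity \eqref{eq:hs} applies, while in $S_3^{(b)}$ the $w_j$ are not orthonormal and must be controlled through the operator-norm triangle inequality together with $\|x\otimes y\|_{\cL_{12}}=\|x\|_{H_1}\|y\|_{H_2}$.

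Next I would condition on the good event $G_n=\{\|\hat{C}-C\|_{\cL_{11}}\le\tfrac1{2m_n}\}$. By {\bf(K)} one has deterministically $K_n\le m_n$, $K_n\le B_n$, $K_n\le E_n$. From $K_n\le B_n$ we get $\hat{\lambda}_j\ge 1/m_n$ for $j\le K_n$, so on $G_n$ Lemma~\ref{Lemma3} gives $\lambda_j\ge\tfrac1{2m_n}$, i.e.\ $\lambda_j^{-1}\le 2m_n$, for $j\le K_n$; from $K_n\le E_n$ we get $\hat{\alpha}_j^{-1}\le m_n$, so Lemma~\ref{Lemma2} gives $\|w_j\|_{H_1}\le 2\sqrt2\,m_n\|\hat{C}-C\|_{\cL_{11}}$ for $j\le K_n$. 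Since $\Delta$ and the $\lambda_j$ appearing in $S_3$ are population quantities, these two estimates are the only places where sampling error enters.

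With them in hand I would estimate the two pieces on $G_n$. Identity \eqref{eq:hs} gives $\|S_3^{(a)}\|_{\cS_{12}}^2=\sum_{j=1}^{K_n}\hat{c}_j^2\lambda_j^{-2}\|\Delta(w_j)\|_{H_2}^2\le\|\Delta\|_{\cL_{12}}^2\sum_{j=1}^{K_n}\lambda_j^{-2}\|w_j\|_{H_1}^2\le 32\|\Delta\|_{\cL_{12}}^2 m_n^5\|\hat{C}-C\|_{\cL_{11}}^2$, and the triangle inequality gives $\|S_3^{(b)}\|_{\cL_{12}}\le\sum_{j=1}^{K_n}\lambda_j^{-1}\|w_j\|_{H_1}\|\Delta(v_j)\|_{H_2}\le 4\sqrt2\,m_n^3\|\Delta\|_{\cL_{12}}\|\hat{C}-C\|_{\cL_{11}}$, where it is essential to bound $\|\Delta(v_j)\|_{H_2}\le\|\Delta\|_{\cL_{12}}$ termwise rather than using Cauchy--Schwarz, which would bring in $\|\Delta\|_{\cS_{12}}$ instead. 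Combining (and using $m_n\ge1$ for large $n$) yields $\|S_3(K_n)\|_{\cL_{12}}^2\le 128\,m_n^6\|\Delta\|_{\cL_{12}}^2\|\hat{C}-C\|_{\cL_{11}}^2$ on $G_n$.

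Finally I would write $\Prob(\|S_3(K_n)\|_{\cL_{12}}>\varepsilon)\le\Prob(\|S_3(K_n)\|_{\cL_{12}}>\varepsilon,\,G_n)+\Prob(G_n^c)$. Markov's inequality and Lemma~\ref{Lemma1} (which via $\|\cdot\|_{\cL_{11}}\le\|\cdot\|_{\cS_{11}}$ gives $\E\|\hat{C}-C\|_{\cL_{11}}^2\le U/n$) bound the first summand by $128U\|\Delta\|_{\cL_{12}}^2 m_n^6/(\varepsilon^2 n)$, while $\Prob(G_n^c)\le 4m_n^2\E\|\hat{C}-C\|_{\cL_{11}}^2\le 4Um_n^2/n\le 4U\varepsilon^2 m_n^6/(\varepsilon^2 n)$, producing the $4\varepsilon^2$ term in the stated prefactor; adding the two gives the claim. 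I expect the delicate point to be exactly this last stage: $K_n$ depends on $\hat{C}$, so the replacement $\lambda_j^{-1}\le 2m_n$ is legitimate only after restricting to $G_n$, and one must separately verify that $\Prob(G_n^c)$ is negligible — it is this detour that is responsible for the slightly unusual additive, $\varepsilon$-dependent contribution to the bound.
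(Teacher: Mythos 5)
Your proof is correct and follows essentially the same route as the paper: the same bilinear decomposition into $\hat c_j\hat v_j\otimes\Delta(w_j)+w_j\otimes\Delta(v_j)$, the same use of Lemmas~\ref{Lemma1}--\ref{Lemma3} together with the defining constraints $K_n\le\min(B_n,E_n,m_n)$, and the same split into a good event (the paper phrases it as $\{\lambda_{K_n}^{-1}\le 2m_n\}$ and then reduces its complement to $\{\|\hat C-C\|_{\cL_{11}}>\tfrac1{2m_n}\}$, exactly your $G_n^c$), yielding the identical constant $U(128\|\Delta\|_{\cL_{12}}^2+4\varepsilon^2)m_n^6/(\varepsilon^2 n)$. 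The only cosmetic difference is that you invoke identity \eqref{eq:hs} for the piece with orthonormal left factors, whereas the paper simply bounds both rank-one summands termwise by $\|\Delta\|_{\cL_{12}}\|w_j\|_{H_1}$.
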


\begin{proof}
By adding and subtracting the term $\hat{c}_j \hat{v}_j \Delta(v_j)$ and using the triangle inequality we derive
\begin{align*}
\Prob ( \|S_3({K_n})\|_{\cL_{12}} > \varepsilon ) 
&= \Prob \Bigg( \Bigg\| \sum\limits_{j=1}^{{K_n}} \frac{1}{\lambda_j} (\hat{c}_j\hat{v}_j \otimes \Delta (\hat{c}_j\hat{v}_j) - v_j \otimes \Delta (v_j)) \Bigg\|_{\cL_{12}} > \varepsilon \Bigg)\\
&\leq \Prob \Bigg(  \sum\limits_{j=1}^{{K_n}} \frac{1}{\lambda_j} \| \hat{c}_j\hat{v}_j \otimes \Delta (\hat{c}_j\hat{v}_j - v_j) + (\hat{c}_j\hat{v}_j - v_j) \otimes \Delta (v_j) \|_{\cL_{12}} > \varepsilon \Bigg)\\
&\leq \Prob \Bigg( \sum\limits_{j=1}^{{K_n}} \frac{1}{\lambda_j} (\|\Delta \|_{\cL_{12}} \|\hat{c}_j\hat{v}_j - v_j\|_{H_1} + \|\hat{c}_j\hat{v}_j - v_j\|_{H_1} \| \Delta \|_{\cL_{12}}) > \varepsilon \Bigg).
\end{align*}
Now we split $\Omega=A\cup A^c$ where $A=\{\frac{1}{\lambda_{K_n}} > 2m_n\}$ and get
\begin{align}
\Prob ( \|S_3({K_n})\|_{\cL_{12}} > \varepsilon ) &\leq \Prob \Bigg( \frac{1}{\lambda_{{K_n}}} \sum\limits_{j=1}^{{K_n}} \|\hat{c}_j\hat{v}_j - v_j\|_{H_1} > \frac{\varepsilon}{2\|\Delta\|_{\cL_{12}}} \Bigg)\nonumber\\
&\leq \Prob \Bigg( \sum\limits_{j=1}^{{K_n}} \|\hat{c}_j\hat{v}_j - v_j\|_{H_1} > \frac{\varepsilon}{4m_n\|\Delta\|_{\cL_{12}}} \Bigg) + \Prob \Bigg(\frac{1}{\lambda_{K_n}} > 2m_n\Bigg).\label{eq:l6}
\end{align}
For the first term in the inequality (\ref{eq:l6}), by Lemma $\ref{Lemma2}$, definition of $E_n$ and the Markov inequality we get
\begin{align*}
\Prob \Bigg( \sum\limits_{j=1}^{{K_n}} \|\hat{c}_j\hat{v}_j - v_j\|_{H_1} > \frac{\varepsilon}{4m_n\|\Delta\|_{\cL_{12}}} \Bigg) &\leq \Prob \Bigg( m_n\max_{1\leq j\leq E_n} \|\hat{c}_j\hat{v}_j - v_j\|_{H_1} > \frac{\varepsilon}{4m_n\|\Delta\|_{\cL_{12}}} \Bigg)\\
&\leq \Prob \Bigg( \max_{1\leq j\leq E_n} \frac{2\sqrt{2}}{\hat{\alpha}_j} \|\hat{C} - C\|_{\cL_{12}} > \frac{\varepsilon}{4m^2_n\|\Delta\|_{\cL_{12}}} \Bigg)\\
&\leq \Prob \Bigg( \|\hat{C} - C\|_{\cL_{12}} > \frac{\varepsilon}{8\sqrt{2} m_n^3\|\Delta\|_{\cL_{12}}} \Bigg)\\
&\leq 128 \|\Delta\|_{\cL_{12}}^2  m_n^6 \frac{\E \|\hat{C} - C\|_{\cL_{12}}^2}{\varepsilon^2}\\
&\leq 128U\|\Delta\|_{\cL_{12}}^2 \frac{m_n^6}{\varepsilon^2 n}.
\end{align*}
Since $\hat{\lambda}_{K_n} \geq \frac{1}{m_n}$, the second term in the inequality (\ref{eq:l6}) is bounded by
\begin{align*}
\Prob \Bigg(\lambda_{K_n} < \frac{1}{2m_n}\Bigg) &\leq \Prob \Bigg(\lambda_{K_n} < \frac{1}{2m_n}\ \cap\ |\hat{\lambda}_{K_n} - \lambda_{K_n}| \leq \frac{1}{2m_n} \Bigg) + \Prob \Bigg(|\hat{\lambda}_{K_n} - \lambda_{K_n}| > \frac{1}{2m_n} \Bigg)\\
&\leq \Prob \Bigg(\|\hat{C} - C\|_{\cL_{12}} > \frac{1}{2m_n} \Bigg)\\
&\leq 4m_n^2\E\|\hat{C} - C\|_{\cL_{12}}^2 \leq 4U \frac{m_n^2}{n}.
\end{align*}
Thus we derive
\begin{align*}
\Prob ( \|S_3({K_n})\|_{\cL_{12}} > \varepsilon ) &\leq 128U\|\Delta\|_{\cL_{12}}^2 \frac{m_n^6}{\varepsilon^2 n} + 4U \frac{m_n^2}{n}\leq
U(128\|\Delta\|_{\cL_{12}}^2 + 4\varepsilon^2) \frac{m_n^6}{\varepsilon^2 n}.
\end{align*}
\end{proof}
Finally we need a lemma which assures that $K_n$ tends to infinity.
\begin{lem}\label{lem:kinf}
Let ${K_n}$ be defined as in {\bf (K)}, then $K_n\convP\infty$.
\end{lem}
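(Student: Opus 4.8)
The plan is to reduce the claim to two elementary tail estimates. By definition, $K_n\convP\infty$ means $\Prob(K_n<M)\to0$ for every fixed $M\in\mathbb{N}$, so this is what I would establish. Since $K_n=\min(B_n,E_n,m_n)$ and $m_n\to\infty$ \emph{deterministically}, for all $n$ large enough we have $m_n\ge M$, whence $\Prob(K_n<M)\le\Prob(B_n<M)+\Prob(E_n<M)$, and it suffices to bound each of the two terms on the right.

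First I would collect the consistency facts for the spectrum of $\hat C$ that the argument needs. Combining Lemma~\ref{Lemma1} with $\|\cdot\|_{\cL_{11}}\le\|\cdot\|_{\cS_{11}}$ gives $\E\|\hat C-C\|_{\cL_{11}}^2\le U/n$. By Lemma~\ref{Lemma3}, $|\hat\lambda_j-\lambda_j|\le\|\hat C-C\|_{\cL_{11}}$ for every $j$, and since $|\min(a,b)-\min(c,d)|\le|a-c|\vee|b-d|$ this also yields $|\hat\alpha_j-\alpha_j|\le 2\|\hat C-C\|_{\cL_{11}}$. Under the standing assumptions of this subsection the eigenvalues of $C$ are distinct and $C$ has full rank, hence $\lambda_j>0$ and $\alpha_j>0$ for every $j$; in particular $\lambda_M>0$ and $\alpha_*:=\min_{1\le j\le M}\alpha_j>0$.

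Next I would exploit the monotone structure of the sets appearing in {\bf (K)}. Because $\hat\lambda_1\ge\hat\lambda_2\ge\cdots$, the sequence $j\mapsto\hat\lambda_j^{-1}$ is non-decreasing, so $\{j:\hat\lambda_j^{-1}\le m_n\}$ is an initial segment $\{1,\dots,B_n\}$, and therefore $\{B_n\ge M\}=\{\hat\lambda_M\ge 1/m_n\}$. Likewise $k\mapsto\max_{1\le j\le k}\hat\alpha_j^{-1}$ is non-decreasing, so $\{E_n\ge M\}=\{\min_{1\le j\le M}\hat\alpha_j\ge 1/m_n\}$. (For $n$ large these sets are non-empty with probability tending to one, since $\hat\lambda_1\convP\lambda_1>0$ and $\hat\alpha_1\convP\alpha_1>0$ while $1/m_n\to 0$, so $B_n$ and $E_n$ are well defined.) Fixing $M$ and taking $n$ large enough that $1/m_n<\lambda_M/2$ and $1/m_n<\alpha_*/2$, the Markov inequality applied to $\|\hat C-C\|_{\cL_{11}}^2$ together with $\E\|\hat C-C\|_{\cL_{11}}^2\le U/n$ gives
\begin{align*}
\Prob(B_n<M)&\le \Prob\left(|\hat\lambda_M-\lambda_M|>\lambda_M/2\right)\le \Prob\left(\|\hat C-C\|_{\cL_{11}}>\lambda_M/2\right)\le \frac{4U}{\lambda_M^2\,n},\\
\Prob(E_n<M)&\le \Prob\left(\min_{1\le j\le M}\hat\alpha_j<\alpha_*/2\right)\le \Prob\left(\|\hat C-C\|_{\cL_{11}}>\alpha_*/4\right)\le \frac{16U}{\alpha_*^2\,n}.
\end{align*}
Both bounds tend to $0$, which finishes the proof.

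I do not expect a genuine obstacle here. The only points that need a little care are the two monotonicity observations that turn the $\arg\max$ definitions into the clean events $\{\hat\lambda_M\ge 1/m_n\}$ and $\{\min_{j\le M}\hat\alpha_j\ge 1/m_n\}$ (together with the remark that $B_n$ and $E_n$ are well defined for $n$ large), and the Lipschitz estimate transferring consistency from $\hat\lambda_{j-1},\hat\lambda_j,\hat\lambda_{j+1}$ to $\hat\alpha_j$. Note also that only $m_n\to\infty$ is used; the stronger requirement $m_n^6=o(n)$ from {\bf (K)} plays no role at this step.
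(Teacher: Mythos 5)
Your proof is correct and follows essentially the same route as the paper: rewrite $\{B_n<M\}$ as $\{\hat\lambda_M<1/m_n\}$ via monotonicity and kill it with Lemma~\ref{Lemma3}, Lemma~\ref{Lemma1} and Markov. In fact you are slightly more complete than the printed proof, which states the goal of bounding $\Prob(\min\{B_n,E_n\}<p)$ but only writes out the $B_n$ term, whereas you also supply the analogous $E_n$ bound via the Lipschitz estimate $|\hat\alpha_j-\alpha_j|\le 2\|\hat C-C\|_{\cL_{11}}$ and the distinctness of the eigenvalues.
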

\begin{proof}
We have to show that $\Prob(\min\{B_n,E_n\} < p) \rightarrow 0$ for any $p\in\mathbb{N}$. Since $\frac{1}{m_n}\searrow 0$, for $n$ large enough we have, by combining Lemma~\ref{Lemma1} and \ref{Lemma3}, that
\begin{align*} 
\Prob(B_n < p) &= \Prob\bigg(\hat{\lambda}_p < \frac{1}{m_n}\bigg) = \Prob\bigg(\lambda_p - \hat{\lambda}_p > \lambda_p - \frac{1}{m_n}\bigg)
\leq \Prob\bigg(|\hat{\lambda}_p - \lambda_p| > \lambda_p - \frac{1}{m_n}\bigg)\to 0.
\end{align*}
%Proving $\Prob(E_n < p) \rightarrow 0$ is analogue. let us set $q= \arg\max\{ 1 \leq j \leq p\ |\ \frac{1}{\hat{\alpha}_j}\}$. Then
%\begin{align*} 
%\Prob(E_n < p) &= \Prob\bigg( \max\limits_{1 \leq j \leq p} \frac{1}{\hat{\alpha}_j} > m_n \bigg) = \Prob\bigg( \frac{1}{\hat{\alpha}_q} - \frac{1}{\alpha_q} > m_n - \frac{1}{\alpha_q}\bigg) \rightarrow 0.
%\end{align*}
%The only case which remains to check is a finite $b$, where we want to show that $K_n$ does not exceed it. However $b<\infty$ implies that $\lambda_{b+1} = 0$ and therefore
%\begin{align} 
%\Prob(B_n > b) &= \Prob\bigg(\hat{\lambda}_{b+1} \geq \frac{1}{m_n}\bigg) = \Prob\bigg(|\hat{\lambda}_{b+1} - \lambda_{b+1}| \geq \frac{1}{m_n}\bigg)\\
%&\leq m_n \E|\hat{\lambda}_{b+1} - \lambda_{b+1}|^2 \leq \frac{m_n^2 U_X}{n} \rightarrow 0.\label{eq:dn:bound}
%\end{align}
\end{proof}
Now we are ready to prove the main result
\begin{proof}[Proof of Theorem \ref{thm:main}]
First, by the triangle inequality we get
\begin{align*}
\|\Psi - \hat{\Psi}_{{K_n}}\|_{\cL_{12}} &\leq \|\Psi-\hat{\Psi}_{{K_n}}\|_{{\cL_{12}}} + \|\Psi - \Psi_{{K_n}}\|_{\cL_{12}}\\
&\leq \|S_1({K_n})\|_{\cL_{12}} + \|S_2({K_n})\|_{\cL_{12}} + \|S_3({K_n})\|_{\cL_{12}} + \|\Psi - \Psi_{{K_n}}\|_{\cL_{12}}.
\end{align*}
By Lemmas \ref{lem:r1}, \ref{lem:d1}, \ref{lem:d2}, \ref{lem:d3} and assumption $m_n^6 = o(n)$ we finally obtain for large enough $n$ that
\begin{align*}
&\Prob( \|\Psi - \hat{\Psi}_{{K_n}}\|_{\cL_{12}} > \varepsilon )\\
&\quad\leq U4^4 \frac{m_n^2}{\varepsilon^4 n} + 4^3U\|\Delta\|^2_{\cS_{12}}\frac{m_n^4}{\varepsilon^2 n} + 4^2U(128\|\Delta\|_{\cL_{12}}^2 + \varepsilon^2/4) \frac{m_n^6}{\varepsilon^2n} + \Prob(\|\Psi - \Psi_{K_n}\|_{\cL_{12}} > \varepsilon/4) \xrightarrow{n \rightarrow \infty} 0.
\end{align*}
\end{proof}
\subsection{Proof of Theorem~\ref{thm:convX}}

In order to simplify the notation we will denote $K=K_n$. This time as a starting point we take a representation of $\Psi$ in the basis $\{\hat v_1, \hat v_2, ...\}$. Let $M_m = \spa\{v_1,v_2,...,v_m\}$, $\hat M_m = \spa\{\hat v_1,\hat v_2,...,\hat v_m\}$ where $\spa\{x_i,\, i\in I\}$ denotes the closed span of the elements $\{x_i,\,i\in I\}$. If $\rank(\hat C)=\ell$, then $\{\hat v_i,\, i> \ell\}$ can be any ONB of $\hat M_\ell^\bot$.   We write $P_A$ for the projection operator which maps on a closed linear space $A$. As usual $A^\bot$ denotes the orthogonal complement of $A$. Since for any $m\geq 1$ we can write $x = P_{\hat M_m}(x) + P_{\hat M_m^\bot}(x)$, the linearity of $\Psi$ and the projection operator gives   
\begin{align*}
\Psi(x) &= \Psi(P_{\hat M_m}(x)) + \Psi(P_{\hat M_m^{\bot}}(x))\\
&= \sum_{j=1}^m\ip{\hat v_j}{x}_{H_1}\Psi(\hat v_j) + \Psi(P_{\hat M_m^{\bot}}(x)).
\end{align*}
Now we evaluate $\Psi$ in some $\hat v_j$ which is not in the kernel of $\hat C$. By definitions of $\Psi$, $\hat C$ and again by linearity of the involved operators
\begin{align*}
\Psi(\hat v_j) &= \frac{1}{\hat \lambda_j} \Psi(\hat C (\hat v_j))\\
&= \frac{1}{\hat \lambda_j} \frac{1}{n} \sum_{i=1}^n \ip{X_i}{\hat v_j}_{H_1} \Psi(X_i)\\
&= \frac{1}{\hat \lambda_j} \frac{1}{n} \sum_{i=1}^n \ip{X_i}{\hat v_j}_{H_1} (Y_i-\varepsilon_i)\\
&= \frac{1}{\hat \lambda_j} (\hat \Delta(\hat v_j) + \hat\Lambda(\hat v_j)),
\end{align*}
where $\hat\Lambda = -\frac{1}{n}\sum_{i=1}^n X_i \otimes \varepsilon_i$. Hence if $m$ is such that $\hat\lambda_m>0$ (which will now be implicitely assumed in the sequel), $\Psi$ can be expressed as
\begin{align*}
\Psi(x) = \sum_{j=1}^m\ip{\hat v_j}{x}_{H_1}\frac{1}{\hat \lambda_j} \hat \Delta(\hat v_j) + \sum_{j=1}^m\ip{ \hat v_j}{x}_{H_1}\frac{1}{\hat \lambda_j} \hat\Lambda (\hat v_j) + \Psi(P_{\hat M_m^\bot}(x)).
\end{align*}
Note that the first term on the right-hand side is just $\hat \Psi_m(x)$. Therefore for any $x$, the distance between $\Psi(x)$ and $\hat \Psi_m(x)$ takes the following form
\begin{align}
\|\Psi(x) - \hat \Psi_m(x)\|_{H_2} = \Bigg\|\sum_{j=1}^m\ip{ \hat v_j}{x}_{H_1}\frac{1}{\hat \lambda_j} \hat\Lambda (\hat v_j) + \Psi(P_{\hat M_m^\bot}(x))\Bigg\|_{H_2}.\label{eq:dpX}
\end{align}
To assess \eqref{eq:dpX} we need the following four lemmas.

\begin{lem}\label{lem:vjconv}
Let $(\lambda_i, v_i)_{i \geq 1}$ and $(\hat \lambda_i, \hat v_i)_{i \geq 1}$ be eigenvalues and eigenfunctions of $C$ and $\hat C$ respectively. Set $j,m \in \mathbb{N}$ such that $j \leq m \leq n$, then
\begin{align*}
\|v_j - P_{\hat  M_m}(v_j)\|_{H_1}^2 \leq 4\frac{\|C-\hat C\|_{\cL_{11}}^2}{(\hat \lambda_{m+1} - \hat \lambda_j)^2}.
\end{align*}
\end{lem}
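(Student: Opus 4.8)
The plan is to control the defect of $v_j$ from the approximating subspace $\hat M_m$ by a standard perturbation argument based on spectral projections. First I would write $v_j - P_{\hat M_m}(v_j) = P_{\hat M_m^\bot}(v_j)$, so that the quantity to bound is $\|P_{\hat M_m^\bot}(v_j)\|_{H_1}^2 = \sum_{i>m}\ip{v_j}{\hat v_i}_{H_1}^2$. The key observation is that each coefficient $\ip{v_j}{\hat v_i}$ with $i>m$ can be estimated through the eigenequations for $C$ and $\hat C$: from $C v_j = \lambda_j v_j$ and $\hat C \hat v_i = \hat\lambda_i \hat v_i$ we get
\begin{align*}
(\lambda_j - \hat\lambda_i)\ip{v_j}{\hat v_i} = \ip{C v_j}{\hat v_i} - \ip{v_j}{\hat C \hat v_i} = \ip{(C-\hat C) v_j}{\hat v_i},
\end{align*}
where I used self-adjointness of $\hat C$ in the second equality. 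Since $i>m\geq j$ and the $\hat\lambda_i$ are nonincreasing, $|\lambda_j - \hat\lambda_i|$ is bounded below in a way that can be related to $\hat\lambda_{m+1}-\hat\lambda_j$; more precisely I would argue that for the relevant range of $i$ one has $|\lambda_j - \hat\lambda_i|\geq \tfrac12|\hat\lambda_{m+1}-\hat\lambda_j|$ (splitting according to whether $\lambda_j$ is large or small relative to $\hat\lambda_{m+1}$, which introduces the factor $4$ in the statement), and then
\begin{align*}
\ip{v_j}{\hat v_i}^2 \leq \frac{4}{(\hat\lambda_{m+1}-\hat\lambda_j)^2}\ip{(C-\hat C)v_j}{\hat v_i}^2.
\end{align*}

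Summing over $i>m$ and using that $\{\hat v_i\}_{i>m}$ is orthonormal (so $\sum_{i>m}\ip{(C-\hat C)v_j}{\hat v_i}^2 \leq \|(C-\hat C)v_j\|_{H_1}^2 \leq \|C-\hat C\|_{\cL_{11}}^2\|v_j\|_{H_1}^2 = \|C-\hat C\|_{\cL_{11}}^2$) yields the claimed bound $\|v_j - P_{\hat M_m}(v_j)\|_{H_1}^2 \leq 4\|C-\hat C\|_{\cL_{11}}^2/(\hat\lambda_{m+1}-\hat\lambda_j)^2$. This is essentially the argument underlying Lemma~\ref{Lemma2}, applied to projections onto the tail of the empirical spectrum rather than to individual eigenvectors. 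The step I expect to require the most care is the lower bound on $|\lambda_j - \hat\lambda_i|$ for $i>m$: one must verify that $\hat\lambda_{m+1}-\hat\lambda_j$ (which is negative, being a gap across the spectrum) indeed governs all these differences after taking absolute values, handling separately the case $\lambda_j \geq \hat\lambda_{m+1}$ (where $|\lambda_j-\hat\lambda_i|\geq \lambda_j - \hat\lambda_{m+1}$) and the case $\lambda_j < \hat\lambda_{m+1}$ (where one instead bounds below by $\hat\lambda_j - \lambda_j$ and combines, or more simply absorbs everything into the factor $4$). Once that elementary case analysis is settled, the rest is the Cauchy–Schwarz / Bessel step above and is routine.
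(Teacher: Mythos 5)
Your overall architecture (Parseval for the tail coefficients $\sum_{i>m}\ip{v_j}{\hat v_i}_{H_1}^2$, an eigen-identity obtained from self-adjointness of $\hat C$, then Bessel) matches the paper's, but the pivotal step --- lower-bounding the eigenvalue differences --- contains a genuine flaw as you have stated it. From $(\lambda_j-\hat\lambda_i)\ip{v_j}{\hat v_i}_{H_1}=\ip{(C-\hat C)v_j}{\hat v_i}_{H_1}$ you need $|\lambda_j-\hat\lambda_i|\geq\tfrac12(\hat\lambda_j-\hat\lambda_{m+1})$ for \emph{all} $i>m$, but $\lambda_j$ is a population eigenvalue and nothing prevents it from coinciding with some $\hat\lambda_i$, $i>m$ (e.g.\ $\lambda_j=\hat\lambda_{m+1}$), in which case the difference vanishes while $\hat\lambda_j-\hat\lambda_{m+1}$ may be large. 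In particular your first case, $\lambda_j\geq\hat\lambda_{m+1}$, only yields $|\lambda_j-\hat\lambda_i|\geq\lambda_j-\hat\lambda_{m+1}$, which can be arbitrarily small compared with $\hat\lambda_j-\hat\lambda_{m+1}$. The correct dichotomy is on $|\hat\lambda_j-\lambda_j|$ versus $\tfrac12(\hat\lambda_j-\hat\lambda_{m+1})$: if $|\hat\lambda_j-\lambda_j|\leq\tfrac12(\hat\lambda_j-\hat\lambda_{m+1})$, then $\lambda_j-\hat\lambda_i\geq\tfrac12(\hat\lambda_j-\hat\lambda_{m+1})$ for every $i>m$ and your Cauchy--Schwarz/Bessel step delivers the bound with the constant $4$; otherwise Lemma~\ref{Lemma3} gives $\|C-\hat C\|_{\cL_{11}}\geq|\hat\lambda_j-\lambda_j|>\tfrac12(\hat\lambda_j-\hat\lambda_{m+1})$, so the right-hand side of the claimed inequality exceeds $1\geq\|v_j-P_{\hat M_m}(v_j)\|_{H_1}^2$ and the statement holds trivially. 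With that repair your argument closes.

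For comparison, the paper sidesteps the case analysis entirely by keeping both eigenvalues in the gap empirical: for $k>m$ it uses the deterministic monotonicity $(\hat\lambda_{m+1}-\hat\lambda_j)^2\leq(\hat\lambda_k-\hat\lambda_j)^2$, writes $\hat\lambda_k\ip{v_j}{\hat v_k}_{H_1}-\hat\lambda_j\ip{v_j}{\hat v_k}_{H_1}=\ip{(\hat C-C)v_j}{\hat v_k}_{H_1}+(\lambda_j-\hat\lambda_j)\ip{v_j}{\hat v_k}_{H_1}$, and applies $(a+b)^2\leq2a^2+2b^2$ together with Lemma~\ref{Lemma3}; the factor $4$ then arises as $2+2$ rather than from a worst case over subcases. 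Both routes rest on the same two ingredients (self-adjointness of $\hat C$ and the Weyl bound $|\lambda_j-\hat\lambda_j|\leq\|C-\hat C\|_{\cL_{11}}$), but the paper's ordering of the algebra makes the gap estimate purely deterministic, which is exactly the point where your version needs the fix above.
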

\begin{proof}
Note that by using Parseval's identity we get
\begin{align*}
\|v_j - P_{\hat M_m}(v_j)\|^2_{H_1} = \sum_{k=1}^\infty \ip{v_j - P_{\hat M_m}(v_j)}{\hat v_k}_{H_1}^2 = \sum_{k>m} \ip{v_j}{\hat v_k}_{H_1}^2.
\end{align*}
Now
\begin{align*}
(\hat \lambda_{m+1} - \hat \lambda_j)^2 \sum_{k>m} \ip{v_j}{\hat v_k}_{H_1}^2
&\leq \sum_{k>m} (\hat \lambda_k \ip{v_j}{\hat v_k}_{H_1} - \hat \lambda_j \ip{v_j}{\hat v_k}_{H_1})^2\\
&= \sum_{k>m} (\ip{v_j}{\hat C(\hat v_k)}_{H_1} - \hat \lambda_j \ip{v_j}{\hat v_k}_{H_1})^2.
\end{align*}
Since $\hat C$ is a self-adjoint operator, simple algebraic transformations yield 
\begin{align*}
(\hat \lambda_{m+1} - \hat \lambda_j)^2 \sum_{k>m} \ip{v_j}{\hat v_k}_{H_1}^2
&\leq \sum_{k>m} (\ip{\hat C(v_j)}{\hat v_k}_{H_1} - \hat \lambda_j \ip{v_j}{\hat v_k}_{H_1})^2\\
&= \sum_{k>m} (\ip{(\hat C-C)(v_j)}{\hat v_k}_{H_1} - (\hat \lambda_j - \lambda_j) \ip{v_j}{\hat v_k}_{H_1})^2\\
&\leq 2 \sum_{k>m} |\ip{(\hat C- C)(v_j)}{\hat v_k}_{H_1}|^2 + 2 \sum_{k>m} ((\hat \lambda_j - \lambda_j) \ip{v_j}{\hat v_k}_{H_1})^2.
\end{align*}
By Parseval's inequality and Lemma~\ref{Lemma3}
\begin{align*}
(\hat \lambda_{m+1} - \hat \lambda_j)^2 \sum_{k>m} \ip{v_j}{\hat v_k}_{H_1}^2
\leq 2 \|(\hat C-C)(v_j)\|_{H_1}^2 + 2 |\hat \lambda_j - \lambda_j|^2
\leq 4 \|\hat C-C\|_{\cL_{11}}^2.
\end{align*}
\end{proof}

\begin{lem}\label{lem:Mnort:conv0}
Let $\Psi$ be defined as in Lemma \ref{thm:convX} and $K=K_n\convP\infty$. Then
$
\|P_{M_{K}^\bot}(X_n)\|_{H_2}\convP 0.
$
\end{lem}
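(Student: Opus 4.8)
The plan is to combine the monotonicity in $m$ of the tail projections $P_{M_m^\bot}(X_n)$ with the standing hypothesis $K_n\convP\infty$; crucially, no independence between $K_n$ and $X_n$ will be needed, which is the point that makes the statement slightly more delicate than Lemma~\ref{lem:r1}.

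First I record the elementary structure. Under the reduction $H_1=\overline{\Ima(C)}$ made in Section~\ref{ss:estimator}, the eigenfunctions $\{v_j\}_{j\geq1}$ of $C$ form an orthonormal basis of $H_1$, so $M_m^\bot=\spa\{v_k:k>m\}$ and hence
\[
g_m:=\|P_{M_m^\bot}(X_n)\|_{H_1}^2=\sum_{k>m}\ip{X_n}{v_k}_{H_1}^2 .
\]
Thus $(g_m)_{m\geq1}$ is non-increasing in $m$, and since $\sum_{k\geq1}\ip{X_n}{v_k}_{H_1}^2=\|X_n\|_{H_1}^2<\infty$ almost surely, we have $g_m\downarrow 0$ a.s.\ as $m\to\infty$.

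Now fix $\varepsilon>0$ and set $\tau_\varepsilon:=\inf\{m\geq1:g_m\leq\varepsilon\}$, which by the previous step is an almost surely finite random variable. Because $(g_m)$ is non-increasing we get the event identity $\{g_{K_n}>\varepsilon\}=\{K_n<\tau_\varepsilon\}$. Given $\delta>0$, choose a deterministic $p\in\mathbb{N}$ with $\Prob(\tau_\varepsilon>p)<\delta/2$, and split according to whether $\tau_\varepsilon\leq p$:
\[
\Prob\big(\|P_{M_{K_n}^\bot}(X_n)\|_{H_1}^2>\varepsilon\big)=\Prob(K_n<\tau_\varepsilon)\leq\Prob(K_n<p)+\Prob(\tau_\varepsilon>p)\leq\Prob(K_n\leq p-1)+\tfrac{\delta}{2}.
\]
Since $K_n\convP\infty$ the first term on the right tends to $0$ as $n\to\infty$, so $\limsup_n\Prob(\|P_{M_{K_n}^\bot}(X_n)\|_{H_1}^2>\varepsilon)\leq\delta/2$; letting $\delta\downarrow0$ gives the claim.

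The only genuine subtlety — the step I would flag as the main obstacle — is that $K_n$ is random and, through $\hat C$, depends on $X_n$ itself, so unlike in Lemma~\ref{lem:r1} one cannot simply substitute a deterministic truncation level. Monotonicity of $m\mapsto\|P_{M_m^\bot}(X_n)\|_{H_1}$ is exactly what dissolves this: it turns $\{g_{K_n}>\varepsilon\}$ into $\{K_n<\tau_\varepsilon\}$ with $\tau_\varepsilon$ a.s.\ finite, after which only $\Prob(K_n\leq p-1)\to0$ is used. (If an $L^2$ statement is preferred, the same monotonicity yields $\|P_{M_{K_n}^\bot}(X_n)\|_{H_1}^2\leq\|P_{M_p^\bot}(X_n)\|_{H_1}^2$ on $\{K_n\geq p\}$, hence $\E\|P_{M_{K_n}^\bot}(X_n)\|_{H_1}^2\leq\sum_{k>p}\lambda_k+\E\big[\|X_n\|_{H_1}^2\mathbf{1}_{\{K_n<p\}}\big]$; the first term is the tail of $\sum_k\lambda_k=\E\|X_1\|_{H_1}^2<\infty$, and the second vanishes as $n\to\infty$ by $\Prob(K_n<p)\to0$ together with uniform integrability of the identically distributed family $\{\|X_n\|_{H_1}^2\}$.)
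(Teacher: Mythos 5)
Your argument is correct and is essentially the paper's own proof: the paper likewise expands $\|P_{M_{K}^\bot}(X_n)\|^2$ as the tail sum $\sum_{i>K}\ip{v_i}{X_n}^2$, introduces an a.s.\ finite random index $J_\varepsilon$ beyond which the tail is below $\varepsilon$ (your $\tau_\varepsilon$), and reduces the claim to $\Prob(K_n\leq J_\varepsilon)\to0$ via $K_n\convP\infty$. Your version merely spells out the splitting over a deterministic $p$ more explicitly (and correctly uses the $H_1$ norm where the paper has a harmless $H_2$ typo); no further comment is needed.
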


\begin{proof}
We write here and in the sequel $X=X_n$.
We first remark that for any $\varepsilon>0$
\begin{align*}
\Prob(\|P_{M_K^\bot}(X)\|^2_{H_2} > \varepsilon) =\Prob\Bigg(\sum_{i=K+1}^\infty |\ip{v_i}{X}_{H_1}|^2 > \varepsilon\Bigg).
\end{align*}
Since $\sum_{i=1}^\infty |\ip{v_i}{X}_{H_1}|^2 = \|X\|^2_{H_1}$, there exists a random variable $J_\varepsilon \in \mathbb{R}$ such that $\sum_{i=J_\varepsilon}^\infty |\ip{v_i}{X}_{H_1}|^2 < \varepsilon$. Since by assumption $E\|X\|^2_{H_1}<\infty$, we conclude that $J_\varepsilon$ is bounded in probability. Hence we obtain
\begin{align*}
\Prob(\|P_{M_K^\bot}(X)\|^2_{H_2} > \varepsilon) &\leq  \Prob\Bigg(\sum_{i=K+1}^\infty |\ip{v_i}{X}_{H_1}|^2 > \varepsilon\ \cap\ K > J_\varepsilon\Bigg) + \Prob(K \leq J_\varepsilon)\\
&=  \Prob(K \leq J_\varepsilon),
\end{align*}
where the last term converges to zero as $n\to\infty$. %Since $\Psi$ is bounded we have
%\begin{align*}
%\Prob(\|\Psi(P_{M_{K}^\bot}(X))\|_{H_2} > \varepsilon) \leq \Prob\bigg(\|P_{M_{K}^\bot}(X)\|_{H_2} > \frac{\varepsilon}{\|\Psi\|_{\cL_{12}}}\bigg)\to 0.
%\end{align*}
\end{proof}

\begin{lem}\label{lem:Linf}
Let $L_n = \arg\max\{ r \leq K: \sum_{i=1}^{r} (\hat \lambda_{K+1} - \hat \lambda_i)^{-2} \leq \xi_n\}$, where $K=K_n$ is given as in Theorem~\ref{thm:convX} and $\xi_n\to\infty$. Then
$
L_n\convP\infty.
$
\end{lem}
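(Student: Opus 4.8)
The plan is to reduce the claim to two elementary facts about the empirical eigenvalues and then exploit that only a \emph{fixed} number $p$ of summands is ever relevant, so that $\xi_n\to\infty$ overwhelms them.

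First I would record the two ingredients. By Lemma~\ref{Lemma1} (whose proof uses only Assumption~{\bf (A)}) and Markov's inequality, $\|\hat C-C\|_{\cL_{11}}\le\|\hat C-C\|_{\cS_{11}}\convP0$, and hence, via Lemma~\ref{Lemma3}, $\hat\lambda_j\convP\lambda_j$ for every fixed $j$; since $\dim H_1=\infty$, all $\lambda_j$ are strictly positive. This yields, first, $\hat\lambda_1/\hat\lambda_p\convP\lambda_1/\lambda_p<\infty$, so that $P(K_n<p)=P(\hat\lambda_1/\hat\lambda_p>m_n)\to0$ for every $p$ (the set $\{j:\hat\lambda_1/\hat\lambda_j\le m_n\}$ is an initial segment, as $\hat\lambda_1/\hat\lambda_j$ is nondecreasing in $j$), exactly as in Lemma~\ref{lem:kinf}; in particular $K_n\convP\infty$. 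It yields, second, $\hat\lambda_{K_n+1}\convP0$: by maximality of $K_n$ in its defining set, $\hat\lambda_{K_n+1}<\hat\lambda_1/m_n$ (read as $0<\hat\lambda_1/m_n$ when $\rank(\hat C)<K_n+1$), and $\hat\lambda_1\convP\lambda_1$ while $m_n\to\infty$.

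Next, fix $p\in\mathbb{N}$. Since $r\mapsto\sum_{i=1}^r(\hat\lambda_{K_n+1}-\hat\lambda_i)^{-2}$ is nondecreasing and the constraint $r\le K_n$ cuts out an initial segment, $L_n\ge p$ holds if and only if $K_n\ge p$ and $\sum_{i=1}^p(\hat\lambda_{K_n+1}-\hat\lambda_i)^{-2}\le\xi_n$; hence $P(L_n<p)\le P(K_n<p)+P\bigl(K_n\ge p,\ \sum_{i=1}^p(\hat\lambda_{K_n+1}-\hat\lambda_i)^{-2}>\xi_n\bigr)$. On $\{K_n\ge p\}$ one has $\hat\lambda_i\ge\hat\lambda_p\ge\hat\lambda_{K_n+1}$ for all $i\le p$, so every summand is bounded by $(\hat\lambda_p-\hat\lambda_{K_n+1})^{-2}$. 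On the event $A_n=\{\hat\lambda_p-\hat\lambda_{K_n+1}\ge\lambda_p/2\}$, which satisfies $P(A_n^c)\to0$ because $\hat\lambda_p\convP\lambda_p>0$ and $\hat\lambda_{K_n+1}\convP0$, the partial sum is therefore at most $4p/\lambda_p^2$, a constant independent of $n$. Since $\xi_n\to\infty$, for all large $n$ we have $\xi_n>4p/\lambda_p^2$, so on $A_n\cap\{K_n\ge p\}$ the event $\sum_{i=1}^p(\hat\lambda_{K_n+1}-\hat\lambda_i)^{-2}>\xi_n$ cannot occur; thus the second probability is at most $P(A_n^c)\to0$, and $P(L_n<p)\to0$. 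As $p$ is arbitrary, $L_n\convP\infty$. (When the defining set of $L_n$ is empty I would set $L_n=0$; this has probability $o(1)$ since $\hat\lambda_1-\hat\lambda_{K_n+1}\ge\hat\lambda_1/2$ and $\xi_n\to\infty$, so it does not affect the conclusion.)

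The only genuinely delicate point is that $K_n$ is random, so $\hat\lambda_{K_n+1}$ is not a fixed eigenvalue estimate; I expect to handle this, as above, purely through the deterministic inequality $\hat\lambda_{K_n+1}<\hat\lambda_1/m_n$, which decouples it from the behaviour of $K_n$ itself. The remaining steps are routine.
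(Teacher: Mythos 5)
Your proof is correct, and its skeleton --- fix $p$, bound $P(L_n<p)\le P(K_n<p)+P\bigl(K_n\ge p,\ \sum_{i=1}^p(\hat\lambda_{K_n+1}-\hat\lambda_i)^{-2}>\xi_n\bigr)$, show the partial sum is bounded in probability, and let $\xi_n\to\infty$ absorb it --- is exactly the paper's. The one place you genuinely diverge is in how the partial sum is dominated. The paper uses only the ordering $\hat\lambda_{K+1}\le\hat\lambda_{r+1}$ on $\{K\ge r\}$, so the sum is bounded by $\sum_{i=1}^r(\hat\lambda_{r+1}-\hat\lambda_i)^{-2}$, which converges in probability to $\sum_{i=1}^r(\lambda_{r+1}-\lambda_i)^{-2}$; for this limit to be finite the paper must restrict attention to indices $r$ with $\lambda_{r+1}<\lambda_r$ (such $r$ exist in abundance because $\lambda_i\downarrow 0$, but the restriction is necessary since Theorem~\ref{thm:convX} does not assume distinct eigenvalues). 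You instead extract from the definition of $K_n$ the almost-sure bound $\hat\lambda_{K_n+1}<\hat\lambda_1/m_n$, hence $\hat\lambda_{K_n+1}\convP 0$, and dominate every summand by $(\hat\lambda_p-\hat\lambda_{K_n+1})^{-2}\le 4/\lambda_p^2$ on an event of probability tending to one. This yields a bound valid for \emph{every} $p$ with no spectral-gap bookkeeping, at the modest cost of invoking the specific form of $K_n$ from Theorem~\ref{thm:convX} rather than only $K_n\convP\infty$; all supporting ingredients (Lemmas~\ref{Lemma1}, \ref{Lemma3} and \ref{lem:kinf}) are the same, and your treatment of the edge cases (empty defining set, $\rank(\hat C)<K_n+1$) is sound.
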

\begin{proof}
Let $r \in \mathbb{N}$ such that for all $1\leq i\leq r$ we have $\lambda_{r+1} \neq \lambda_i$. Note that $\E\|X\|^2_{H_1} < \infty$ implies $\lambda_i \rightarrow 0$ and since $\lambda_i > 0$ we can find infinitely many $r$ satisfying this condition. We choose such $r$ and obtain
\begin{align*}
\Prob(L_n < r) &\leq \Prob\Bigg(\sum_{i=1}^r \frac{1}{(\hat \lambda_{K+1} - \hat \lambda_i)^2} > \xi_n\ \cap\ {K} \geq r \Bigg) + \Prob({K} < r).
\end{align*}
Lemma \ref{lem:kinf} implies that $\Prob({K} < r) \rightarrow 0$. The first term is bounded by
$\Prob\left(\sum_{i=1}^r \frac{1}{(\hat \lambda_{r+1} - \hat \lambda_i)^2} > \xi_n\ \right)$.
%&\leq \Prob\Bigg(\sum_{i=1}^r \frac{1}{(\hat \lambda_{r+1} - \hat \lambda_i)^2} - \sum_{i=1}^r \frac{1}{(\lambda_{r+1} - \lambda_i)^2} > \frac{m_n}{2} \Bigg)\\&\quad+ \Prob\Bigg(\sum_{i=1}^r \frac{1}{(\lambda_{r+1} - \lambda_i)^2}  > \frac{m_n}{2}\Bigg).%=:A_{r,n}^{(1)}+A_{r,n}^{(2)}.
%\end{align*}
Since $\hat\lambda_i\convP\lambda_i$ and $r$ is fixed while $\xi_n\to\infty$, it follows that $P(L_n<r)\to 0$ if $n\to \infty$. Since $r$ can be chosen arbitrarily large, the proof is finished.
\end{proof}

\begin{lem} \label{lem:Proj}
Let $\Psi$ be defined as in Lemma \ref{thm:convX}, then
$\|P_{M_K}(X) - P_{\hat M_K}(X)\|_{H_1} \convP 0$.
\end{lem}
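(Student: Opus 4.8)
The plan is to split the error into a ``tail of $X$'' part and a ``leading eigenfunctions'' part, the latter being the only regime in which the perturbation bound of Lemma~\ref{lem:vjconv} is usable. Write $X=X_n$. Decomposing $X=P_{M_K}(X)+P_{M_K^\bot}(X)$ and using that orthogonal projections are contractions, one has the identity $P_{M_K}(X)-P_{\hat M_K}(X)=P_{\hat M_K^\bot}(P_{M_K}(X))-P_{\hat M_K}(P_{M_K^\bot}(X))$, hence
\[
\|P_{M_K}(X)-P_{\hat M_K}(X)\|_{H_1}\le \|P_{\hat M_K^\bot}(P_{M_K}(X))\|_{H_1}+\|P_{M_K^\bot}(X)\|_{H_1}.
\]
The second term converges to $0$ in probability by Lemma~\ref{lem:Mnort:conv0} (recall $K=K_n\convP\infty$), so it suffices to treat $\|P_{\hat M_K^\bot}(P_{M_K}(X))\|_{H_1}$.

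For that term, fix $\xi_n\to\infty$ with $\xi_n=o(n)$ (say $\xi_n=\sqrt n$) and let $L_n$ be as in Lemma~\ref{lem:Linf}, so that $L_n\convP\infty$ and, by construction, $\sum_{i=1}^{L_n}(\hat\lambda_{K+1}-\hat\lambda_i)^{-2}\le\xi_n$. Splitting $P_{M_K}(X)=P_{M_{L_n}}(X)+\big(P_{M_K}(X)-P_{M_{L_n}}(X)\big)$ and using once more that $P_{\hat M_K^\bot}$ is a contraction,
\[
\|P_{\hat M_K^\bot}(P_{M_K}(X))\|_{H_1}\le \|P_{\hat M_K^\bot}(P_{M_{L_n}}(X))\|_{H_1}+\|P_{M_K}(X)-P_{M_{L_n}}(X)\|_{H_1}.
\]
The rightmost term equals $\big(\sum_{i=L_n+1}^{K}|\ip{v_i}{X}_{H_1}|^2\big)^{1/2}\le\|P_{M_{L_n}^\bot}(X)\|_{H_1}$, which tends to $0$ in probability by the argument proving Lemma~\ref{lem:Mnort:conv0} applied with $L_n$ in place of $K$ (that argument uses only $L_n\convP\infty$ and $\E\|X\|_{H_1}^2<\infty$). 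For the other term, $P_{M_{L_n}}(X)=\sum_{i=1}^{L_n}\ip{v_i}{X}_{H_1}v_i$ and $P_{\hat M_K^\bot}(v_i)=v_i-P_{\hat M_K}(v_i)$, so the Cauchy--Schwarz inequality gives
\[
\|P_{\hat M_K^\bot}(P_{M_{L_n}}(X))\|_{H_1}\le \Big(\sum_{i=1}^{L_n}|\ip{v_i}{X}_{H_1}|^2\Big)^{1/2}\Big(\sum_{i=1}^{L_n}\|v_i-P_{\hat M_K}(v_i)\|_{H_1}^2\Big)^{1/2}.
\]
The first factor is at most $\|X\|_{H_1}=O_P(1)$; for the second, Lemma~\ref{lem:vjconv} (with $m=K$, legitimate since $i\le L_n\le K\le n$) together with the defining property of $L_n$ gives $\sum_{i=1}^{L_n}\|v_i-P_{\hat M_K}(v_i)\|_{H_1}^2\le 4\|C-\hat C\|_{\cL_{11}}^2\sum_{i=1}^{L_n}(\hat\lambda_{K+1}-\hat\lambda_i)^{-2}\le 4\xi_n\|C-\hat C\|_{\cL_{11}}^2$. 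By Lemma~\ref{Lemma1} and Markov's inequality, $\|C-\hat C\|_{\cL_{11}}\le\|C-\hat C\|_{\cS_{11}}=O_P(n^{-1/2})$, so the second factor is $O_P(\sqrt{\xi_n/n})=o_P(1)$. Collecting the pieces gives the claim.

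The one delicate point is the block of indices $L_n<i\le K$: there the eigenvalue gap $\hat\lambda_i-\hat\lambda_{K+1}$ degenerates and Lemma~\ref{lem:vjconv} is worthless. The trick that makes it go through is not to estimate those projection errors individually, but to observe that the corresponding chunk $P_{M_K}(X)-P_{M_{L_n}}(X)$ of $X$ already has vanishing norm (a pure tail effect coming from $L_n\convP\infty$), so applying the contraction $P_{\hat M_K^\bot}$ to it changes nothing. The remaining thing to get right is the choice of $\xi_n$: large enough that $L_n\convP\infty$ via Lemma~\ref{lem:Linf}, but $o(n)$ so that $\xi_n\|C-\hat C\|_{\cL_{11}}^2\to0$; both hold for $\xi_n=\sqrt n$.
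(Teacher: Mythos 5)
Your proof is correct and follows essentially the same route as the paper: both split $X$ at the random level $L_n$ from Lemma~\ref{lem:Linf}, control the first $L_n$ eigendirections via Cauchy--Schwarz, Lemma~\ref{lem:vjconv} and the bound $\sum_{i\le L_n}(\hat\lambda_{K+1}-\hat\lambda_i)^{-2}\le\xi_n$ with $\xi_n\to\infty$, $\xi_n=o(n)$, and absorb the remaining indices into a tail term that vanishes because $L_n\convP\infty$. The only cosmetic difference is that you organize the decomposition through the identity $P_{M_K}(X)-P_{\hat M_K}(X)=P_{\hat M_K^\bot}(P_{M_K}(X))-P_{\hat M_K}(P_{M_K^\bot}(X))$, whereas the paper writes $X=X^{(1)}+X^{(2)}$ directly; the resulting bounds are the same.
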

\begin{proof}
Let us define two variables $X^{(1)} = \sum_{i=1}^L \ip{X}{v_i}_{H_1}v_i$, $X^{(2)} = \sum_{i=L+1}^\infty \ip{X}{v_i}_{H_1}v_i$ and $L$ as in Lemma \ref{lem:Linf}. Again for simplifying the notation we will write $L$ instead of $L_n$. Since $X=X^{(1)}+X^{(2)}$ we derive
\begin{align}
\|P_{M_K}(X) - P_{\hat M_K}(X)\|_{H_1} 
&\leq \|P_{M_K}(X^{(1)}) - P_{\hat M_K}(X^{(1)})\|_{H_1} + \|P_{\hat M_K}(X^{(2)})\|_{H_1} + \|P_{M_K}(X^{(2)})\|_{H_1}. \label{eq:4:3:1}
\end{align}
The last two terms are bounded by $2\|X^{(2)}\|_{H_1}$. For the first summand in \eqref{eq:4:3:1} we get
\begin{align*}
\|P_{M_K}(X^{(1)}) - P_{\hat M_K}(X^{(1)})\|_{H_1} 
&= \Bigg\|\sum_{i=1}^{L}\ip{X}{v_i}_{H_1}(v_i - P_{\hat M_K} (v_i))\Bigg\|_{H_1}.
\end{align*}
Let us choose $\xi_n=o(n)$ in Lemma~\ref{lem:Linf}. 
The triangle inequality, the Cauchy-Schwarz inequality, Lemma~\ref{lem:vjconv} and the definition of $L$ entail
\begin{align*}
&\|P_{M_K}(X^{(1)}) - P_{\hat M_K}(X^{(1)})\|_{H_1}
\leq \sum_{i=1}^{L}|\ip{X}{v_i}_{H_1}|\|v_i - P_{\hat M_K} (v_i)\|_{H_1}\\
&\quad\leq \left(\sum_{i=1}^{L}|\ip{X}{v_i}_{H_1}|^2\right)^{1/2} \left(\sum_{i=1}^{L}\|v_i - P_{\hat M_K} (v_i)\|_{H_1}^2\right)^{1/2}\\
&\quad\leq \|X\|_{H_1} \left(\sum_{i=1}^{L}\|v_i - P_{\hat M_K} (v_i)\|_{H_1}^2\right)^{1/2}\\
&\quad\leq 2\|X\|_{H_1} \|C - \hat C\|_{\cL_{11}} \left(\sum_{i=1}^{L}\frac{1}{(\hat \lambda_{K+1} - \hat \lambda_i)^2}\right)^{1/2}\\
&\quad\leq 2 \|X\|_{H_1} \|C - \hat C\|_{\cL_{11}} \sqrt{\xi_n}.
\end{align*}
This implies the inequality
\begin{align}
\|P_{M_K}(X) - P_{\hat M_K}(X)\|_{H_1} \leq 2 \|X\|_{H_1} \|C - \hat C\|_{\cL_{11}} \sqrt{\xi_n} + 2\|X^{(2)}\|_{H_1}.\label{eq:4:3:2}
\end{align}
Hence by Lemma~\ref{Lemma1} we have $2 \|X\|_{H_1}\|C-\hat C\|_{\cL_{11}}\sqrt{\xi_n}=o_P(1)$. Furthermore we have that $\|X^{(2)}\|=\left(\sum_{j>L}|\langle X,v_j\rangle|^2\right)^{1/2}\convP 0$. This follows from the proof of Lemma~\ref{lem:Mnort:conv0}.
%Convergence in probability follows \eqref{eq:4:3:2} since
%\begin{align}
%\Prob(\|P_{M_L}(X) - P_{\hat M_L}(X)\|_{H_1} > \varepsilon) &\leq \Prob(2 \|X\|_{H_1} \|C - \hat C\|_{\cS_{11}} m_n > \varepsilon/2)  + \Prob(2\|X^{(2)}\|_{H_1} > \varepsilon/2)\label{eq:4:3:3}.
%\end{align}
%Thus by spliting the probability of a product, by Markov's inequality and by Lemma \ref{Lemma1} we bound the first term of \eqref{eq:4:3:3}
%\begin{align*}
%\Prob(2 \|X\|_{H_1} \|C - \hat C\|_{\cS_{11}} m_n > \varepsilon/2) &\leq \Prob\Bigg(\|X\|_{H_1} > \frac{\varepsilon n^{1/4}}{8}\Bigg) + 
%\Prob\Bigg(\|C - \hat C\|_{\cS_{11}} m_n > \frac{\varepsilon}{8n^{1/4}}\Bigg)\\
%&\leq \frac{8\E \|X\|_{H_1}}{\varepsilon n^{1/4}} + \frac{8^2\E\|C - \hat C\|_{\cS_{11}}^2 m_n^2\sqrt{n}}{\varepsilon^2}\\
%&\leq \frac{8\E \|X\|_{H_1}}{\varepsilon n^{1/4}} + \frac{8^2U m_n^2}{\varepsilon^2 \sqrt{n}}.
%\end{align*}
%Finally we prove convergence of the second term in \eqref{eq:4:3:3} by the triangle inequality and the fact that there exists $R_\varepsilon$ such that $\sum_{i=R_\varepsilon+1}^\infty |\ip{X}{v_i}|^2 < \varepsilon$. Note that
%\begin{align*}
%\Prob(\|X^{(2)}\|^2_{H_1} > \varepsilon) &= \Prob\Bigg(\sum_{i=R_\varepsilon+1}^\infty |\ip{X}{v_i}|^2 > \varepsilon\Bigg) + \Prob(L < R_\varepsilon)\\
%&= \Prob(L < R_\varepsilon) \rightarrow 0,
%\end{align*}
%where the convergence is proven in Lemma \ref{lem:Linf}.
\end{proof}

\begin{lem} \label{lem:Mhat}
Let $\Psi$ be defined as in Lemma \ref{thm:convX}, then $
\|\Psi(P_{\hat M_K^\bot}(X))\|_{H_2}\convP 0.$
\end{lem}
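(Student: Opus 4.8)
\emph{Plan.} The statement will follow almost immediately from the boundedness of $\Psi$ combined with Lemmas~\ref{lem:Mnort:conv0} and~\ref{lem:Proj}, so the proof is short. First I would use that $\Psi\in\cL_{12}$, which gives $\|\Psi(P_{\hat M_K^\bot}(X))\|_{H_2}\le\|\Psi\|_{\cL_{12}}\,\|P_{\hat M_K^\bot}(X)\|_{H_1}$; since $\|\Psi\|_{\cL_{12}}<\infty$, it therefore suffices to show $\|P_{\hat M_K^\bot}(X)\|_{H_1}\convP 0$. This is the only genuine reduction step, and it is what makes the proof of Theorem~\ref{thm:convX} markedly simpler than in the Hilbert--Schmidt estimation case: we only need the \emph{direction} $P_{\hat M_K^\bot}(X)$ to shrink, not a uniform operator bound.

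Next I would write $P_{\hat M_K^\bot}(X)=X-P_{\hat M_K}(X)$, add and subtract $P_{M_K}(X)$, and apply the triangle inequality to obtain
$\|P_{\hat M_K^\bot}(X)\|_{H_1}\le\|P_{M_K^\bot}(X)\|_{H_1}+\|P_{M_K}(X)-P_{\hat M_K}(X)\|_{H_1}$.
The first term tends to $0$ in probability by Lemma~\ref{lem:Mnort:conv0} (a tail-of-the-Karhunen--Lo\`eve-expansion argument), and the second by Lemma~\ref{lem:Proj} (the eigenprojection perturbation bound). Adding the two bounds yields the claim. Before invoking these two lemmas one should note that the truncation level $K=K_n$ of Theorem~\ref{thm:convX} satisfies $K_n\convP\infty$; this is verified exactly as in Lemma~\ref{lem:kinf}, since $\Prob(K_n<p)=\Prob(\hat\lambda_1/\hat\lambda_p>m_n)\to 0$, because $\hat\lambda_1/\hat\lambda_p\convP\lambda_1/\lambda_p<\infty$ (recall $C$ is assumed of full rank, so $\lambda_p>0$) while $m_n\to\infty$.

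I do not expect any real obstacle here: all the analytic content — controlling $\|P_{M_K^\bot}(X)\|_{H_1}$ and bounding the discrepancy between the true and empirical eigenprojections in terms of $\|C-\hat C\|_{\cL_{11}}$ and the gaps $(\hat\lambda_{K+1}-\hat\lambda_i)^{-1}$ — has already been carried out in Lemmas~\ref{lem:Mnort:conv0}--\ref{lem:Proj} and ultimately rests on Lemma~\ref{Lemma1}. The present lemma merely assembles these ingredients after the trivial operator-norm reduction. The mildly delicate point is purely notational, namely that when $\rank(\hat C)=\ell<\infty$ the family $\{\hat v_i\}_{i>\ell}$ is chosen to be an arbitrary ONB of $\hat M_\ell^\bot$, so that $P_{\hat M_K}$ is well defined and the identity $P_{\hat M_K^\bot}(X)=X-P_{\hat M_K}(X)$ holds unambiguously.
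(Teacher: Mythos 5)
Your proof is correct and follows essentially the same route as the paper: the paper inserts $P_{M_K}(X)+P_{M_K^\bot}(X)-P_{\hat M_K}(X)$ inside $\Psi$ and then pulls out $\|\Psi\|_{\cL_{12}}$, whereas you pull out the operator norm first and then split $P_{\hat M_K^\bot}(X)$ — the same decomposition in a different order, finished by the same two lemmas. Your added remark verifying $K_n\convP\infty$ for the truncation level of Theorem~\ref{thm:convX} is a sensible (and correct) explicit check of a hypothesis the paper leaves implicit.
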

\begin{proof}
Some simple manipulations show
\begin{align*}
\|\Psi(P_{\hat M_K^\bot}(X))\|_{H_2} &= \|\Psi(X - P_{\hat M_K}(X))\|_{H_2}\\
&= \|\Psi(P_{M_K}(X) + P_{M^\bot_K}(X) - P_{\hat M_K}(X))\|_{H_2}\\
&\leq \|\Psi(P_{M_K}(X)) - \Psi(P_{\hat M_K}(X))\|_{H_2}  + \|\Psi(P_{M_K^\bot}(X))\|_{H_2}\\
&\leq\|\Psi\|_{\cL_{12}}\left(\|P_{M_K}(X)- P_{\hat M_K}(X)\|_{H_1}  + \|P_{M_K^\bot}(X)\|_{H_1}\right).
\end{align*}
Direct applications of Lemma~\ref{lem:Mnort:conv0} and Lemma~\ref{lem:Proj} finish the proof.
\end{proof}
\begin{proof}[Proof of Theorem \ref{thm:convX}] Set 
$$
\Theta_n(x)=\sum_{j=1}^{K_n} \frac{\hat\Lambda (\hat v_j)}{\hat \lambda_j}\ip{\hat v_j}{x}_{H_1}.
$$
By the representation \eqref{eq:dpX} and the triangle inequality
\begin{align*}
\|\Psi(X) - \hat\Psi(X)\|_{H_2} \leq \|\Theta_n(X)\|_{H_2} + \|\Psi(P_{\hat M_{K_n}^\bot}(X))\|_{H_2}.
\end{align*}
Lemma~\ref{lem:Mhat} shows that the second term tends to zero in probability. 

If in Lemma \ref{Lemma1} we define $\Psi\equiv 0$, then $\hat\Lambda =\hat \Delta$ and by independence of $\varepsilon_k$ and $X_k$ we get $\Lambda=0$. By the arguments of Lemma \ref{lem:d1} we infer
$
\Prob (\|\Theta_n\|_{\cL_{12}} > \varepsilon) \leq U m_n^2 / \varepsilon^2 n,
$
which implies that $\|\Theta_n(X)\|_{H_2}\convP 0$.
%\begin{align}
%\Prob \Bigg( \Bigg\|\sum_{j=1}^{K_n} \frac{\hat\Lambda (\hat v_j)}{\hat \lambda_j}\ip{\hat v_j}{X}_{H_1}\Bigg\|_{H_2} > \varepsilon \Bigg) &=
%\Prob \Bigg(\frac{\|X\|_{H_1}}{n^{\frac{1}{4}}} n^{\frac{1}{4}}\Bigg\|\sum_{j=1}^{K_n} v_j \otimes \frac{\hat\Lambda (\hat v_j)}{\hat \lambda_j} \Bigg\|_{\cL_{12}} > \varepsilon\Bigg)\nonumber\\
%&\leq U \frac{m_n^2\sqrt{n}}{\varepsilon^2 n} + \frac{\E\|X\|_{H_2}^2}{\varepsilon \sqrt{n}}\label{eq:H}.
%\end{align}
%Finally 
%\begin{align*}
%\Prob (\|\Psi(X) - \hat\Psi(X)\|_{\cL_{12}} > \varepsilon) &\leq \Prob \Bigg(\Bigg\|\sum_{j=1}^{K_n} v_j \otimes \frac{\hat\Lambda (\hat v_j)}{\hat \lambda_j} \Bigg\|_{\cL_{12}} > \frac{\varepsilon}{2}\Bigg) + \Prob(\|\Psi(P_{\hat M_{K_n}^\bot}(x))\|_{H_2} > \varepsilon/2),
%\end{align*}
%where the first term converges by \eqref{eq:H} and the second one by Lemma \ref{lem:Mhat}.
\end{proof}

\section{Acknowledgement}

This research was supported by the Communaut\'e fran\c{c}aise de Belgique---Actions de Recherche Concert\'ees (2010--2015) and  Interuniversity Attraction Poles Programme (IAP-network P7/06), Belgian Science Policy Office.

\providecommand{\bysame}{\leavevmode\hbox to3em{\hrulefill}\thinspace}
\providecommand{\MR}{\relax\ifhmode\unskip\space\fi MR }
% \MRhref is called by the amsart/book/proc definition of \MR.
\providecommand{\MRhref}[2]{%
  \href{http://www.ams.org/mathscinet-getitem?mr=#1}{#2}
}
\providecommand{\href}[2]{#2}

\begin{addmargin}[1em]{2em}% 1em left, 2em right
Corresponding author:
\begin{addmargin}[1em]{1em}% 1em left, 2em right
Siegfried H\"ormann\\
D\'epartment de Math\'ematique,\\
Universit\'e libre de Bruxelles (ULB),\\
CP 215, Boulevard du Triomphe\\
B-1050 Bruxelles, Belgium,\\
siegfried.hormann@ulb.ac.be
\end{addmargin}
\end{addmargin}

\end{document}